\documentclass[notitlepage]{scrartcl}
\usepackage{amsmath}
\usepackage{amssymb}
\usepackage{amsthm}
\usepackage{abstract}
\usepackage{xcolor}
\usepackage{hyperref}
\hypersetup{
    colorlinks=true,
    linkcolor=blue,
    filecolor=magenta,      
    urlcolor=cyan
    }
\newtheorem{theorem}{Theorem}
\newtheorem{thm}{Theorem}[section]

\newtheorem{lemma}[thm]{Lemma}
\newtheorem{claim}[thm]{Claim}
\newtheorem{remark}[thm]{Remark}

\usepackage[margin=1in]{geometry}
\title{Expansion in Supercritical Random Subgraphs of Expanders and its Consequences}
\newcommand{\footremember}[2]{%
    \footnote{#2}
    \newcounter{#1}
    \setcounter{#1}{\value{footnote}}%
}
 
\author{%
Sahar Diskin \footremember{alley}{School of Mathematical Sciences, Tel Aviv University, Tel Aviv 6997801, Israel. Email: sahardiskin@mail.tau.ac.il.}%
\and Michael Krivelevich \footremember{trailer}{School of Mathematical Sciences, Tel Aviv University, Tel Aviv 6997801, Israel. Email:
krivelev@tauex.tau.ac.il. Research supported in part by USA–Israel BSF grant 2018267.}%
}
\begin{document}
	\maketitle
	
\begin{onecolabstract}
    In 2004, Frieze, Krivelevich and Martin \cite{FKM} established the emergence of a giant component in random subgraphs of pseudo-random graphs. We study several typical properties of the giant component, most notably its expansion characteristics. We establish an asymptotic vertex expansion of connected sets in the giant by a factor of $\tilde{O}\left(\epsilon^2\right)$. From these expansion properties, we derive that the diameter of the giant is typically $O_{\epsilon}\left(\log n\right)$, and that the mixing time of a lazy random walk on the giant is asymptotically $O_{\epsilon}\left(\log^2 n\right)$. We also show similar asymptotic expansion properties of (not necessarily connected) linear sized subsets in the giant, and the typical existence of a large expander as a subgraph.
\end{onecolabstract}

\section{Introduction}
Percolation theory, the study of which was initiated by Broadbent and Hammersley \cite{broadbent and hammersley} in 1957, is a mathematical discipline researching the following probabilistic model: given a graph $G$, the percolated subgraph $G_p$ is the random subgraph of $G$ obtained by retaining each edge of $G$ independently with probability $p$. We refer the reader to \cite{kesten}, \cite{grimmet} and \cite{percolation book} for systematic coverage of percolation theory.

Percolation on several concrete base graphs has been studied extensively. One notable example is percolation on the complete graph $K_n$, which is equivalent to the classical model of the random graph $G(n,p)$. In their groundbreaking paper \cite{Erdos1} from 1960, Erd\H{o}s and R\'enyi discovered that $G(n,p)$ (working on its close analogue $G(n,m)$) undergoes a phase transition around probability $p=\frac{1}{n}$: for any constant $\epsilon>0$, if $p=\frac{1-\epsilon}{n}$ then all the connected components of $G(n,p)$ are typically of size $O(\log n)$, while for $p=\frac{1+\epsilon}{n}$, \textbf{whp}\footnote{With high probability, that is, with probability tending to $1$ as $n$ tends to infinity.} there emerges a unique component of linear size in $G(n,p)$, usually called the \textit{giant component}. In the same paper \cite{Erdos1}, Erd\H{o}s and R\'enyi also obtained the asymptotic order of this giant component. We refer the reader to \cite{JLR}, \cite{book random graphs} and \cite{intro to random graphs} for a systematic coverage of random graphs. Similar phenomenon of an emergence of a unique linear sized component has been discovered in percolation on other concrete base graphs, with one well studied example being the $d$-dimensional hypercube \cite{AKS, BKL}. 

When trying to show the emergence of a unique giant component in percolation on a large class of graphs $\mathcal{G}$, certainly some assumptions on $G\in\mathcal{G}$ have to be made. For example, letting $\mathcal{G}$ be the family of $d$-regular graphs, $G\in \mathcal{G}$ could be a collection of vertex disjoint cliques of size $d+1$, having no large connected components to begin with. It is thus necessary to impose some restrictions on the edge-distribution of the graphs. Hence, it may be natural to consider \textit{pseudo-random graphs}. 

In this paper, we are concerned with a specific model of pseudo-random graphs called $(n,d,\lambda)$-\textit{graphs} (also called spectral or algebraic expanders). Informally, pseudo-random graphs are graphs whose edge distribution resembles that of random graphs, with the same number of vertices and edge density. An $(n,d,\lambda)$-graph is a $d$-regular graph on $n$ vertices, where its eigenvalues (that is, the eigenvalues of its adjacency matrix) $d= \lambda_1 \ge \lambda_2 \ge\ldots \ge \lambda_n$ satisfy $\lambda=\max\left\{|\lambda_2|, |\lambda_n|\right\}$. The greater the ratio between $d$ and $\lambda$, also called the spectral ratio, the more tightly the distribution of the edges of the graph approaches that of a truly random graph $G\left(n,\frac{d}{n}\right)$. This follows from the expander mixing lemma, due to Alon and Chung \cite{EML} (stated as Lemma \ref{eml} in this paper). Thus, $(n,d,\lambda)$-graphs serve frequently as a standard model for pseudo-random graphs. Crucially, note that a large spectral ratio implies that $d$ is large as well. We refer to \cite{expanders} for a comprehensive survey on the subject of pseudo-random graphs.

Returning to the subject at hand, the emergence of a unique giant component in percolation on $(n,d,\lambda)$-graphs has been shown in 2004 by Frieze, Krivelevich, and Martin \cite{FKM}. For such graphs, it was shown in \cite{FKM} that around $p=\frac{1}{d}$ a phase transition occurs, with respect to the sizes of the components, having many properties similar to the phase transition of $G(n,p)$ around $p=\frac{1}{n}$.

To be more concrete, let $y=y(\epsilon)$ be the unique solution in $(0,1)$ of:
\begin{align}
y\exp(-y)=(1+\epsilon)\exp\left(-(1+\epsilon)\right).
\end{align}
Theorem 1 of $\cite{FKM}$ then states the following:
\begin{thm}[\cite{FKM}] \label{FKM-thm} \textit{Let $\epsilon>0$ be a small enough constant and let $\delta>0$ be such that $\delta\le \epsilon^4$. Let $G=(V,E)$ be an $(n,d,\lambda)$-graph with $\frac{\lambda}{d}\le \delta$. Let $p=\frac{1+\epsilon}{d}$. Form $G_p$ by retaining every edge of $G$ independently with probability $p$. Then, \textbf{whp}, there is a unique giant component $L_1$ of asymptotic size $\left(1-\frac{y}{1+\epsilon}\right)n$, where $y$ is defined as in $(1)$. \textbf{Whp}, all the other components are of size $O\left(\log n\right).$}
\end{thm}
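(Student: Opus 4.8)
The plan is to run an exploration process --- breadth-first or depth-first search --- on $G_p$ and to compare it, step by step, with a Galton--Watson branching process whose offspring distribution is $\mathrm{Bin}(d,p)$; since $d$ is large and $dp=1+\epsilon$, this branching process is essentially Poisson$(1+\epsilon)$, hence supercritical with survival probability $\rho$ solving $1-\rho=\exp(-(1+\epsilon)\rho)$, that is, $\rho=1-\frac{y}{1+\epsilon}$ with $y$ as in $(1)$. The tool that makes the comparison legitimate is the expander mixing lemma (Lemma~\ref{eml}): it gives $e(S)\le\frac{d|S|^2}{2n}+\frac{\lambda|S|}{2}$ for every vertex set $S$, so as long as the explored set $S$ satisfies $|S|\le\tfrac{\epsilon}{4}n$ one has $e(S)\le\tfrac{\epsilon}{2}d|S|$ --- every active vertex still has $(1-O(\epsilon))d$ unrevealed $G$-neighbours, and the edges internal to $S$ amount to only an $O(\epsilon)$-fraction of $d|S|$. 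Consequently, up to the first moment the explored set exceeds $\tfrac{\epsilon}{4}n$, the surplus (queue-size) process of the exploration stochastically dominates a random walk with i.i.d.\ increments of mean at least $(1+\epsilon)(1-O(\epsilon))-1\ge\tfrac{\epsilon}{4}$ and bounded exponential moments, and is dominated by one with increments of mean $\epsilon$.

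First I would establish the component dichotomy. By the lower domination above, a standard large-deviation analysis shows that the probability that the exploration from a fixed vertex $v$ halts having revealed exactly $k$ vertices, for $K\epsilon^{-2}\log n\le k\le\tfrac{\epsilon}{4}n$, is at most $\exp(-c\epsilon^2 k)$ (the associated positively-drifting walk must return near the origin at time $k$); a union bound over $v$ and $k$, with $K$ a large enough constant, then gives that \textbf{whp} no component of $G_p$ has size in $[K\epsilon^{-2}\log n,\tfrac{\epsilon}{4}n]$. Next, for the total number of vertices in large components, write $C(v)$ for the component of $v$ and set $Z_v=\mathbf 1[|C(v)|\ge\log^2 n]$. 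The branching-process comparison --- valid well below the threshold $\tfrac{\epsilon}{4}n$ --- gives $\Pr[Z_v=1]=\rho+o(1)$, since the branching process reaches size $\log^2 n$ with probability $\rho+o(1)$. Running the exploration from two distinct vertices $u,v$, each of which reveals $o(n)$ vertices before the comparison is dropped, the two explorations meet with probability $o(1)$ (again by Lemma~\ref{eml}, applied to the two revealed sets), whence $\Pr[Z_uZ_v=1]=\rho^2+o(1)$. Therefore $Z:=\sum_v Z_v$ satisfies $\mathbb E[Z]=(\rho+o(1))n$ and $\mathrm{Var}(Z)=o(n^2)$, so by Chebyshev \textbf{whp} $Z=(\rho+o(1))n$; combined with the dichotomy, \textbf{whp} the union of all components of $G_p$ of size exceeding $\tfrac{\epsilon}{4}n$ spans $(\rho+o(1))n$ vertices, and at least one such component exists.

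For uniqueness I would sprinkle. Fix a small $\epsilon'>0$, write $p=1-(1-p_1)(1-p_2)$ with $p_1=\frac{1+\epsilon-\epsilon'}{d}$ (so $p_2=\Theta(\epsilon'/d)$), and reveal $G_p$ as $G_{p_1}$ together with the extra edges $G'_{p_2}$. Applying the previous paragraph to the still-supercritical $G_{p_1}$ shows that \textbf{whp} its components have size $O_\epsilon(\log n)$ or exceeding $\tfrac{\epsilon}{4}n$, and the large ones span $(\rho_1+o(1))n$ vertices, where $\rho_1$ is the survival probability of the branching process with mean $1+\epsilon-\epsilon'$. For any two large $G_{p_1}$-components $A,B$, Lemma~\ref{eml} gives $e_G(A,B)\ge\frac{d|A||B|}{n}-\lambda\sqrt{|A||B|}\ge\tfrac1{32}\epsilon^2 dn$, and all of these $G$-edges are missing from $G_{p_1}$; hence $G'_{p_2}$ contains no $A$--$B$ edge only with probability $(1-p_2)^{\epsilon^2 dn/32}=\exp(-\Omega(\epsilon'\epsilon^2 n))$, and a union bound over the $O(\epsilon^{-2})$ pairs shows \textbf{whp} all large $G_{p_1}$-components merge into a single component $L_1$ of $G_p$ with $|L_1|\ge(\rho_1-o(1))n$. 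Choosing $\epsilon'$ small enough that $\rho-\rho_1<\tfrac{\epsilon}{16}$, the aggregate bound $(\rho+o(1))n$ from the previous paragraph leaves no room for a second component of $G_p$ of size exceeding $\tfrac{\epsilon}{4}n$; thus $L_1$ is the unique such component, $|L_1|=(\rho+o(1))n=\bigl(1-\tfrac{y}{1+\epsilon}+o(1)\bigr)n$, and all other components have size $O_\epsilon(\log n)$.

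The hard part will be the second paragraph: replacing ``of linear size'' by the exact density $\rho$. This forces one to run the branching-process comparison tightly in \emph{both} directions, quantifying through Lemma~\ref{eml} precisely how the (large but finite) degree $d$ and the accumulated internal edges distort the offspring law; equally delicate is the two-point estimate underpinning the second-moment step --- namely that explorations from two distinct vertices are asymptotically independent up to the $\tfrac{\epsilon}{4}n$ cut-off. By contrast, once this exploration/expander-mixing machinery is set up, the dichotomy and the sprinkling argument are comparatively routine.
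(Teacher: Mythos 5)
First, note that the paper does not prove this statement: it is quoted verbatim from Frieze--Krivelevich--Martin \cite{FKM}, so there is no in-paper proof to compare against (the closest in-house machinery is the BFS analysis of Lemma~\ref{BFS} and Claim~\ref{large-Q}, which is in the spirit of your first two paragraphs). Your overall architecture --- exploration process controlled by the expander mixing lemma, a dichotomy excluding middle-sized components, and sprinkling for uniqueness --- is sound, and the dichotomy and sprinkling paragraphs would go through essentially as written. The actual FKM argument pins down the density $1-\frac{y}{1+\epsilon}$ differently, by a first-moment count of small tree components (via a bound like Lemma~\ref{trees} together with the identity $\sum_k \frac{k^{k-1}}{k!}\bigl((1+\epsilon)e^{-(1+\epsilon)}\bigr)^k=\frac{y}{1+\epsilon}$) plus a concentration argument, rather than by your two-point branching-process estimate.

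That difference matters, because the step you yourself flag as delicate contains a genuine gap as justified. You claim the truncated explorations from $u$ and $v$ "meet with probability $o(1)$ (again by Lemma~\ref{eml}, applied to the two revealed sets)." But with revealed sets of size $m=\log^2 n$ and only $\lambda\le\epsilon^4 d$, the mixing lemma bounds the number of $G$-edges between them by $\frac{dm^2}{n}+\lambda m$, and the expected number of such edges that are open in $G_p$ is therefore up to $p\lambda m=\Theta(\epsilon^4\log^2 n)\to\infty$; a Markov/union bound gives nothing, and asymptotic independence of $Z_u$ and $Z_v$ does not follow. (This is precisely where a merely constant spectral ratio is much weaker than the near-optimal $\lambda=O(\sqrt d)$ regime.) To repair this you would either need an averaged second-moment bound over pairs $(u,v)$ exploiting that few pairs have atypically many cross-edges, or a much lower truncation threshold --- but a constant threshold no longer meshes with your dichotomy, which only excludes component sizes above $K\epsilon^{-2}\log n$. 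A secondary, fixable imprecision: the bound $e(S)\le\frac{\epsilon}{2}d|S|$ controls internal edges only in aggregate, not per active vertex, so the per-vertex offspring domination needs a separate argument discarding the $O(\epsilon^6|S|)$ vertices with many revealed neighbours; and since $d$ is a large constant rather than growing with $n$, the errors in the density should be $o_\epsilon(1)$ rather than $o(1)$ (which is what the theorem's "asymptotic size" is implicitly allowing).
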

Some comments are in order here. In the same paper \cite{FKM}, the authors showed that when $p=\frac{1-\epsilon}{d}$ (for a small constant $\epsilon>0$), then typically all the connected components are of order $O_{\epsilon}(\log n)$. Furthermore, this phase transition and the asymptotic order of the giant component are analogous, both quantitatively and numerically, to the classical phase transition in $G(n,p)$ around $p=\frac{1}{n}$. We thus call the regime where $p=\frac{1-\epsilon}{d}$ the \textit{subcritical regime} and the regime where $p=\frac{1+\epsilon}{d}$ the \textit{supercritical regime}. Observe that from $(1)$ and Theorem \ref{FKM-thm}, it can be seen that \textbf{whp}, $|L_1|=\left(1+o_{\epsilon}(1)\right)2\epsilon n$. Furthermore, we note that the requirement stated in \cite{FKM} is that $\delta\to 0$, but it can be easily verified that the proofs go through with $\delta\le \epsilon^4$, as stated above.

The giant components in the aforementioned percolated concrete graphs (such as $G(n,p)$ and the percolated hypercube) are known to be typically far from tree-like, and in fact are fairly rich combinatorial structures, with many interesting properties to study. In the case of $G(n,p)$, there has been extensive research into several properties of the giant, such as the typical diameter of (\cite{diameter2, diameter}), and the asymptotic mixing time of a random walk (\cite{FR2, BKW}) on the giant component. One approach taken in order to study such properties is to examine the expansion properties of the giant component. For example, this has been done in supercritical $G(n,p)$ by Benjamini, Kozma, and Wormald \cite{BKW} in order to study the mixing time of a random walk on the giant, and very recently by Erde, Kang and Krivelevich \cite{EKK} in supercritical percolation on the hypercube in order to obtain asymptotic bounds on the diameter and the mixing time of a random walk on the giant. Krivelevich, Reichman, and Samotij \cite{perturbed} studied the expansion properties of perturbed connected graphs, and obtained from them asymptotic bounds on the diameter and the mixing time. In \cite{expanders}, Krivelevich showed how to argue about expansion and to obtain several interesting properties of graphs from their expansion properties.

The aim of this paper is to study the typical expansion properties of the giant component of its random subgraph, and to derive from them several properties of the giant component, such as its asymptotic diameter and mixing time.

With this in mind, we are now able to state and discuss our results.

We start with some notation. Given a graph $G$, we let $N_G(S)$ be the external neighbourhood of $S\subseteq V(G)$ in $G$, and let $\partial_G(S)$ be the set of all the edges in $G$ with exactly one endpoint in $S$ (the edge boundary of $S$ in $G$). 

Our first main result regards the vertex-expansion properties:
\begin{theorem} \label{vertex-expansion}
\textit{Let $\epsilon>0$ be a small enough constant and let $\delta>0$ be such that $\delta\le \epsilon^4$. Let $G=(V,E)$ be an $(n,d,\lambda)$-graph with $\frac{\lambda}{d}\le \delta$. Let $p=\frac{1+\epsilon}{d}$. Form $G_p$ by retaining every edge of $G$ independently with probability $p$. Let $L_1$ be the largest component of $G_p$. Then, there exists an absolute constant $c>0$ such that \textbf{whp} for any $S\subseteq L_1$:
\begin{enumerate}
    \item if $\frac{16\ln n}{\epsilon^2}\le |S| \le \frac{\epsilon^2 n}{50}$ and $S$ spans a connected subgraph in $G_p$, then:
    \begin{align*}
        \bigg|N_{G_p}(S)\bigg|\ge\frac{c\epsilon^2|S|}{\ln\left(\frac{1}{\epsilon}\right)};
    \end{align*}
     and,
    \item if $\frac{\epsilon^2n}{50}\le |S| \le \frac{12\epsilon n}{11}$, then:
    \begin{align*}
        \bigg|N_{G_p}(S)\bigg|\ge\frac{c\epsilon^2|S|}{\ln^2\left(\frac{1}{\epsilon}\right)}.
    \end{align*}
\end{enumerate}}
\end{theorem}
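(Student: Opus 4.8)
The plan is a first--moment (union--bound) argument in both parts, with the expander mixing lemma (Lemma~\ref{eml}) controlling edge densities and Theorem~\ref{FKM-thm} fixing the order of $L_1$ and of the non--giant components. For Part~1, fix $k$ with $\tfrac{16\ln n}{\epsilon^2}\le k\le\tfrac{\epsilon^2 n}{50}$ and a $k$-set $S$. The structural point I would use is that, writing $d_S(v)$ for the number of $G$-edges joining $v\notin S$ to $S$, the random variable $\big|N_{G_p}(S)\big|=\sum_{v\notin S}\mathbf 1[v\in N_{G_p}(S)]$ is a sum of \emph{independent} indicators with $\Pr[v\in N_{G_p}(S)]=1-(1-p)^{d_S(v)}$, and that it depends only on edges leaving $S$, hence is independent of the event that $S$ is connected in $G_p$ (which depends only on edges inside $S$). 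First I would bound $\mu:=\mathbb E\big|N_{G_p}(S)\big|$ from below. By the mixing lemma, $e_G(S,V\setminus S)\ge dk\big(1-\tfrac kn-\tfrac\lambda d\big)\ge dk\big(1-\tfrac{\epsilon^2}{40}\big)$, and the mixing lemma also shows that only $O(\epsilon^4 k)$ vertices $v\notin S$ satisfy $d_S(v)\ge\tfrac{\epsilon^2 d}{25}$, so all but an $O(\epsilon^2)$-fraction of the edges leaving $S$ reach vertices with $pd_S(v)=O(\epsilon^2)$, for which $1-(1-p)^{d_S(v)}\ge(1-o(1))pd_S(v)$; summing these yields $\mu\ge\big(1+\epsilon-\tfrac{\epsilon^2}{10}\big)k$.

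Next, with $t:=\tfrac{c\epsilon^2 k}{\ln(1/\epsilon)}$, the Chernoff bound for sums of independent indicators gives $\Pr\big[\big|N_{G_p}(S)\big|\le t\big]\le\exp\big(t\ln(e\mu/t)-\mu\big)\le\exp\big(-(1+\epsilon)k+O(\epsilon^2)k\big)$, since $\ln(e\mu/t)=O(\ln(1/\epsilon))$. Finally I would union bound over $S$: the number of subtrees of $G$ on $k$ vertices is at most $n(ed)^{k-1}$, and $\Pr[S\text{ connected in }G_p]\le(\#\text{spanning trees of }G[S])\,p^{k-1}$, so the expected number of connected-in-$G_p$ sets $S$ of size $k$ with $\big|N_{G_p}(S)\big|\le t$ is at most $n(ed)^{k-1}p^{k-1}\cdot\max_S\Pr\big[\big|N_{G_p}(S)\big|\le t\big]=n(e(1+\epsilon))^{k-1}\exp\big(-(1+\epsilon)k+O(\epsilon^2)k\big)\le n\exp\big(-\Omega(\epsilon^2 k)\big)$, which is $n^{-\Omega(1)}$ once $k\ge\tfrac{16\ln n}{\epsilon^2}$, provided $c$ is a small enough absolute constant; summing over $k$ completes the proof, and any such $S$ lies in $L_1$ automatically because $\tfrac{16\ln n}{\epsilon^2}$ exceeds the maximal order of a non--giant component. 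The main obstacle here is to make the constants fit: one genuinely needs $\mu\ge(1+\epsilon-O(\epsilon^2))k$ and not merely $\mu=\Omega(k)$, since the entropy of choosing a connected $k$-set through a spanning tree is $\approx(e(1+\epsilon))^{k}$; and the $\ln(1/\epsilon)$ in the denominator of the expansion factor is exactly what keeps the entropy $\ln(e\mu/t)=O(\ln(1/\epsilon))$ of the target neighbourhood from devouring the $\Omega(\epsilon^2 k)$ gain, which is why the EML control of the profile $(d_S(v))_{v\notin S}$ is unavoidable.

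For Part~2, let $S\subseteq L_1$ with $\tfrac{\epsilon^2 n}{50}\le|S|\le\tfrac{12\epsilon n}{11}$ and suppose $\big|N_{G_p}(S)\big|<t_2:=\tfrac{c\epsilon^2|S|}{\ln^2(1/\epsilon)}$. The key observation I would exploit is that, since $L_1$ is a \emph{whole} connected component of $G_p$, we have $N_{G_p}(S)\subseteq L_1$, so $S$ sends \emph{no} $G_p$-edge to $V\setminus\big(S\cup N_{G_p}(S)\big)$, a set of size $\ge n/2$; moreover, by Theorem~\ref{FKM-thm}, $W:=L_1\setminus\big(S\cup N_{G_p}(S)\big)$ has size $\ge\tfrac{4\epsilon n}{5}$ and sends no $G_p$-edge to $S$. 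By the mixing lemma $e_G(S,W)\ge\tfrac{d|S||W|}{n}-\lambda\sqrt{|S||W|}=\Omega(\epsilon^3 dn)$, so a \emph{fixed} configuration of this type survives with probability at most $\exp(-\Omega(\epsilon^3 n))$. The obstacle is that a bare union bound over $S\subseteq L_1$ is too lossy (there are $e^{\Omega(\epsilon^2 n\ln(1/\epsilon))}$ such sets when $|S|\approx\tfrac{\epsilon^2 n}{50}$, against a gain of only $e^{-\Omega(\epsilon^2 n)}$), so I would first refine $S$ using Part~1: split $S$ into the connected components of $G_p[S]$; each component of size in $[\tfrac{16\ln n}{\epsilon^2},\tfrac{\epsilon^2 n}{50}]$ has, by Part~1, $G_p$-neighbourhood of size $\Omega\big(\tfrac{\epsilon^2\cdot(\text{size})}{\ln(1/\epsilon)}\big)$ inside $N_{G_p}(S)$, while the smaller components each send an edge into $N_{G_p}(S)$; tallying these shows that $\Omega\big(\tfrac{\epsilon^2|S|}{\ln(1/\epsilon)}\big)$ edges of $G_p$ run between $S$ and $N_{G_p}(S)$, which — together with $\big|N_{G_p}(S)\big|<t_2$ and the near--bounded degrees of $G_p$ — restricts $S$ enough to make the union bound succeed. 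The one factor of $\ln(1/\epsilon)$ lost in this reduction is why the statement has $\ln^2(1/\epsilon)$ and not $\ln(1/\epsilon)$, and making this step precise is the crux of Part~2.
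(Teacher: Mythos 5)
Your Part 1 is essentially sound, and it takes a genuinely different route from the paper inside the same first-moment skeleton. Both arguments enumerate connected $k$-sets via spanning trees at cost $n(ed)^{k-1}p^{k-1}=n(e(1+\epsilon))^{k-1}$ and must beat this entropy by $\exp(-\Omega(\epsilon^2k))$; the paper does the second step by lower-bounding $|N_{G_p}(S)|$ by the matching number of the auxiliary bipartite graph $\Gamma(S,p)$ and union-bounding over maximal matchings, whereas you exploit the fact that the indicators $\mathbf 1[v\in N_{G_p}(S)]$ are independent across $v\notin S$ (disjoint edge sets) and independent of the connectivity of $G_p[S]$, then apply a lower-tail Chernoff bound with mean $\mu\ge(1+\epsilon-O(\epsilon^2))k$ extracted from the expander mixing lemma. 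Your version is arguably cleaner; the constants do fit for $c$ small (one needs $t\ln(e\mu/t)\le 3c\epsilon^2k$ against the gap $\epsilon-\ln(1+\epsilon)\ge\epsilon^2/2-O(\epsilon^3)$, already eaten into by the $O(\epsilon^2)k$ loss in $\mu$), and your identification of why the $\ln(1/\epsilon)$ denominator is needed is exactly right.

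Part 2, however, has a genuine gap, and it sits precisely where the paper's real work is. Your reduction decomposes $S$ into the connected components of $G_p[S]$ and invokes Part 1 on components of size in $[\tfrac{16\ln n}{\epsilon^2},\tfrac{\epsilon^2n}{50}]$, but a component of $G_p[S]$ can have size anywhere up to $|S|\le\tfrac{12\epsilon n}{11}$ — for instance $S$ itself may be connected of size $\epsilon n/2$ — and for such components Part 1 says nothing; asserting their expansion is exactly the statement you are trying to prove, so the reduction is circular there. This is the range the paper covers with Lemma~\ref{BFS} (no components of size in $[\epsilon^2n/16,11\epsilon n/10]$, with an exponentially small failure probability) combined with the sprinkling argument of Lemma~\ref{edge-expansion-connected-sets}: if a connected set of this size had small edge boundary, deleting each edge independently with probability $1-\rho\approx\epsilon^2/50$ would disconnect it into a forbidden component with probability large enough to contradict Lemma~\ref{BFS}. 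You have no substitute for this mechanism, and your opening gambit for Part 2 (the set $W$ and $e_G(S,W)=\Omega(\epsilon^3dn)$) cannot supply one, since, as you yourself note, the entropy of choosing $S$ overwhelms the $\exp(-\Omega(\epsilon^3n))$ gain.

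A second, smaller gap: components of $G_p[S]$ of size between $\Theta\bigl(\tfrac{\ln(1/\epsilon)}{\epsilon^2}\bigr)$ and $\tfrac{16\ln n}{\epsilon^2}$ are covered neither by Part 1 (too small) nor by ``each sends one edge'' (one edge is $\ll\tfrac{\epsilon^2k}{\ln(1/\epsilon)}$ once $k\gg\tfrac{\ln(1/\epsilon)}{\epsilon^2}$), and such components could a priori cover most of $S$. The paper handles this with Lemma~\ref{volume-lemma}, a first-moment-plus-Azuma bound showing the total volume of such badly expanding medium trees is at most $2\epsilon^3n$, negligible against $|S|\ge\tfrac{\epsilon^2n}{50}$. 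Finally, the extra factor of $\ln(1/\epsilon)$ in Part 2 does not come from the component decomposition (which, done as in Theorem~\ref{edge-expansion}, loses only a constant): it comes from converting the edge boundary into a vertex neighbourhood by dividing by the effective maximum degree $5\ln(1/\epsilon)$, after discarding high-degree vertices via Lemma~\ref{high-degree}. Your closing step ``restricts $S$ enough to make the union bound succeed'' would need to be replaced by these three ingredients to become a proof.
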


Let us first note that it is necessary to require in the first part of the statement above that the sets of up to a small linear size are connected. Indeed, for simplicity, let us consider the classical model of $G(n,p)$. There it can be seen (for example, from the structural result of Ding, Lubetzky, and Peres \cite{DLP}) that typically one can find a non-connected subset of the giant component which is of a linear size, but expands by less than a $\tilde{O}\left(\epsilon^2\right)$ factor. 

Clearly, the vertex-expansion properties of Theorem \ref{vertex-expansion} imply that the same holds for edge-expansion as well. However, for linear sized (not necessarily connected) subsets, we can obtain a slightly better edge-expansion factor. Indeed, we have the following asymptotic edge-expansion property:
\begin{theorem} \label{edge-expansion}
\textit{Let $\epsilon>0$ be a small enough constant and let $\delta>0$ be such that $\delta\le \epsilon^4$. Let $G=(V,E)$ be an $(n,d,\lambda)$-graph with $\frac{\lambda}{d}\le \delta$. Let $p=\frac{1+\epsilon}{d}$. Form $G_p$ by retaining every edge of $G$ independently with probability $p$. Let $L_1$ be the giant component of $G_p$. Then, there exists an absolute constant $c>0$ such that \textbf{whp} for any $S\subseteq L_1$ such that $\frac{\epsilon^2 n}{50}\le |S|\le \frac{12\epsilon n}{11}$, we have 
    $$\bigg|\partial_{G_p}(S)\bigg|\ge\frac{c\epsilon^2|S|}{\ln\left(\frac{1}{\epsilon}\right)}.$$
}
\end{theorem}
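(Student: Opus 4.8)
Fix $S\subseteq L_1$ with $\frac{\epsilon^2 n}{50}\le |S|\le\frac{12\epsilon n}{11}$. By definition $\partial_{G_p}(S)=e_{G_p}(S,V\setminus S)$, and since every vertex of $N_{G_p}(S)$ is incident to at least one edge of this cut, Theorem~\ref{vertex-expansion}(2) already gives $|\partial_{G_p}(S)|\ge |N_{G_p}(S)|\ge\frac{c\epsilon^2|S|}{\ln^2(1/\epsilon)}$; so the whole content is to save the extra factor $\ln(1/\epsilon)$. First I would pass to connected pieces: write $S=S_1\cup\dots\cup S_k$ for the vertex sets of the components of $G_p[S]$. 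No edge of $G_p$ joins distinct $S_i$'s, so the sets $\partial_{G_p}(S_i)$ are pairwise edge-disjoint and contained in $\partial_{G_p}(S)$, whence $|\partial_{G_p}(S)|\ge\sum_i|\partial_{G_p}(S_i)|$. Call $S_i$ \emph{heavy} if $|S_i|\ge\frac{16\ln n}{\epsilon^2}$ and \emph{light} otherwise; it suffices to bound the heavy and the light contributions separately.

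\textbf{Heavy pieces.} If a heavy $S_i$ has $|S_i|\le\frac{\epsilon^2 n}{50}$, then Theorem~\ref{vertex-expansion}(1) gives $|\partial_{G_p}(S_i)|\ge |N_{G_p}(S_i)|\ge\frac{c\epsilon^2|S_i|}{\ln(1/\epsilon)}$. For the at most $O(1/\epsilon)$ heavy pieces with $|S_i|>\frac{\epsilon^2 n}{50}$, I would prove directly that \textbf{whp} every connected $W\subseteq V(G_p)$ with $|W|$ in this range satisfies $|\partial_{G_p}(W)|\ge\frac{c\epsilon^2|W|}{\ln(1/\epsilon)}$, via a first‑moment bound over connected subgraphs. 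A connected $W$ with $|W|=w$ contains a spanning tree, so $\Pr[W\text{ connected in }G_p]$ is at most the number of spanning trees of $G[W]$ times $p^{w-1}$; using the expander mixing lemma to bound $e_G(W)\le\frac{dw^2}{2n}+\frac{\lambda w}{2}$ (and hence $e_G(W,V\setminus W)=dw-2e_G(W)\ge(1-o_\epsilon(1))dw$), and the $d$-regularity of $G$ to count subtrees, the expected number of connected $w$-subsets is governed by the mean degree $1+\epsilon$ rather than by $d$; combining this with a Chernoff bound on $|\partial_{G_p}(W)|$, which is $\mathrm{Bin}\big(e_G(W,V\setminus W),p\big)$ and is independent of the internal edges of $W$, and summing over $w$ closes the argument. (It is here that $\delta\le\epsilon^4$ is used, to make the expander-mixing error terms $o(\epsilon^2)$-small relative to the main terms, which is the accuracy the first moment requires.)

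\textbf{Light pieces.} A single light piece need not expand: $L_1$ genuinely contains pendant subtrees of size up to $\Theta_\epsilon(\log n)$ with edge boundary $1$. Instead one shows that \textbf{whp} there is no family of vertex-disjoint connected subgraphs $W_1,\dots,W_j$ of $G_p$, each contained in $L_1$, each with $|W_\ell|<\frac{16\ln n}{\epsilon^2}$ and $|\partial_{G_p}(W_\ell)|<\frac{c\epsilon^2|W_\ell|}{\ln(1/\epsilon)}$, and with $\sum_\ell|W_\ell|\ge\frac{\epsilon^2 n}{100}$. Each such $W_\ell$ must have $|W_\ell|>\frac{\ln(1/\epsilon)}{c\epsilon^2}=\omega(1)$, since otherwise the bound would force $|\partial_{G_p}(W_\ell)|=0$, impossible for a connected $W_\ell\subseteq L_1$ with $W_\ell\neq L_1$. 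For a fixed small set, the probability of being connected in $G_p$ with few boundary edges is tiny — again it carries a multiplicative $(O(\epsilon))^{|W_\ell|}$ from the spanning tree being entirely present — so a union bound organised by the sizes $|W_\ell|$, exploiting the scarcity of such small sparse subgraphs (the same flavour of estimate used to control the small components in Theorem~\ref{FKM-thm}), yields $o(1)$. Granting this, for any admissible $S$ the light pieces violating the target have total size $<\frac{\epsilon^2 n}{100}\le\frac{|S|}{2}$, so the remaining pieces carry at least $\frac{|S|}{2}$ vertices and contribute at least $\frac{c\epsilon^2}{\ln(1/\epsilon)}\cdot\frac{|S|}{2}$ edges to $\partial_{G_p}(S)$, which is the assertion up to the value of $c$.

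\textbf{The main obstacle.} The crux, in the heavy non-small regime and in the light-piece packing bound alike, is that a naive union bound over all vertex subsets of the relevant sizes is hopeless: there are $\exp\big(\Theta(\epsilon n\ln(1/\epsilon))\big)$ of them — already $\exp\big(\Theta(\epsilon^2 n\ln(1/\epsilon))\big)$ of size $\sim\epsilon^2 n$ — which overwhelms any concentration obtainable from the expander mixing lemma together with Chernoff's inequality. The resolution is to restrict the union bound to \emph{connected} subgraphs of $G_p$, whose count is controlled by the mean degree $1+\epsilon$ and not by $d$, and to use that requiring such a subgraph to be present at all costs a factor $(O(\epsilon))^{|W|}$, which for small $\epsilon$ beats the $\exp(O(|W|))$ count with room to spare; that slack, of order $\exp(-\Theta(\epsilon^2|W|))$, must then be protected from the expander-mixing error terms, which is exactly where the hypothesis $\lambda/d\le\delta\le\epsilon^4$ enters. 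Making all the constants — in the three size thresholds $\frac{16\ln n}{\epsilon^2}$, $\frac{\epsilon^2 n}{50}$, $\frac{12\epsilon n}{11}$, in the expansion factor $c$, and in the tolerated spectral ratio — line up simultaneously across both the heavy and the light analyses is the real work of the proof.
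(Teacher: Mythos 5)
Your overall architecture — decompose $S$ into the components of $G_p[S]$, handle large connected pieces by a connected-set expansion bound, observe that very small pieces contribute at least one boundary edge because $S\subseteq L_1$, and show that the intermediate ``bad'' pieces have small total volume — is exactly the paper's strategy (Lemma \ref{edge-expansion-connected-sets} plus Lemma \ref{volume-lemma}). But there is a genuine gap in your treatment of the heavy pieces with $|W|>\frac{\epsilon^2 n}{50}$: the direct first-moment bound over connected subgraphs that you propose there does not work. For a connected $W$ with $|W|=w=\alpha n$, the expected number of such sets present in $G_p$ with small edge boundary is governed, per vertex, by
\begin{align*}
e\,(1+\epsilon)\,\exp\bigl(-(1+\epsilon)(1-\alpha-O(\delta))\bigr)=\exp\Bigl(-\tfrac{\epsilon^2}{2}+(1+\epsilon)\alpha+O(\epsilon^3)\Bigr),
\end{align*}
where the $e(1+\epsilon)$ comes from the tree count times $p^{w-1}$ and the last factor from the boundary edges missing from $G_p$. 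The saving of $e^{-\epsilon^2 w/2}$ is exactly what makes the computation close for $\alpha\le\frac{\epsilon^2}{50}$ (this is Property~1 of Theorem \ref{vertex-expansion}), but once $\alpha\gtrsim\frac{\epsilon^2}{2}$ the exponent turns positive and the first moment is $\exp(+\Theta(\epsilon\alpha n))$ — the ``factor $(O(\epsilon))^{|W|}$'' you invoke for the cost of presence is not there; presence of a spanning tree costs $(e(1+\epsilon))^{w}$ \emph{gained}, not lost. This is precisely why the paper restricts the first-moment/union-bound argument to $|S|\le\frac{\epsilon^2 n}{50}$ and handles connected sets of size in $\bigl[\frac{\epsilon^2 n}{50},\frac{12\epsilon n}{11}\bigr]$ by an entirely different mechanism: Lemma \ref{BFS} (a BFS analysis showing components of size in $\bigl[\frac{\epsilon^2 n}{16},\frac{11\epsilon n}{10}\bigr]$ occur with probability at most $\exp(-\Theta(\epsilon^2 k))$), combined with sprinkling — if a connected $S$ of such size had boundary smaller than $\frac{c\epsilon^2|S|}{\ln(1/\epsilon)}$, then deleting each edge of $G_p$ independently with probability $\approx\frac{\epsilon^2}{50}$ would disconnect $S$ from the rest while keeping it connected with probability at least $\exp(-\frac{\epsilon^2 k}{300^2})$, producing a component of forbidden size too often. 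Your proposal is missing this idea, and without it the heavy linear-sized case does not close.

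A secondary, smaller issue: for the light intermediate pieces, the expected total volume of bad pieces is of order $\epsilon^{\Theta(1)}n$ — a positive (if small) fraction of $n$ — so a union bound does not yield $o(1)$ for the event that their total volume exceeds $\frac{\epsilon^2 n}{100}$, and even Markov's inequality only gives a small constant. The paper needs the additional concentration step (Azuma--Hoeffding on the edge-exposure martingale in Lemma \ref{volume-lemma}) to turn the expectation bound into a \textbf{whp} statement; your write-up should include something of this kind.
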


Let us mention a few related results. In 2007, Ofek \cite{ofek} studied the edge-expansion property of $G_p$ when the spectral ratio is nearly optimal and $p\ge \frac{C}{\sqrt{d}}$, obtaining a typical edge-expansion by an $\Omega\left(\frac{1}{\ln n}\right)$ factor. In supercritical $G(n,p)$, Fountoulakis and Reed \cite{FR2}, and independently Benjamini, Kozma, and Wormald \cite{BKW} showed that asymptotically the edge-expansion of connected subsets of the giant is by an $\Omega(\epsilon)$ factor. Krivelevich \cite{firstexpanders, expanders} showed a method for finding large expanders as subgraphs of general graphs, and in particular the typical existence of a linearly large constant factor expander in supercritical $G(n,p)$. Theorems \ref{vertex-expansion} and \ref{edge-expansion} here establish \textbf{whp} an $\tilde{\Omega}\left(\epsilon^2\right)$ factor.

From Theorem \ref{vertex-expansion}, we are able to derive that the largest connected component of $G_p$ \textbf{whp} contains a large expander as a subgraph. Indeed, a simple corollary of Property 2 of Theorem \ref{vertex-expansion} is the following:
\begin{theorem} \label{expanding-subgraph}
\textit{Let $\epsilon>0$ be a small enough constant and let $\delta>0$ such that $\delta\le \epsilon^4$. Let $G=(V,E)$ be an $(n,d,\lambda)$-graph with $\frac{\lambda}{d}\le \delta$. Let $p=\frac{1+\epsilon}{d}$. Form $G_p$ by retaining every edge of $G$ independently with probability $p$. Let $L_1$ be the giant component of $G_p$. Then, \textbf{whp} there exists $L_1'\subseteq L_1$ with $|L_1'|\ge\frac{7\epsilon n}{4}$ such that for every $S\subseteq L_1'$ with $|S|\le \frac{|L_1'|}{2}$,
$$\bigg|N_{L_1'}(S)\bigg|\ge\frac{c \epsilon^2|S|}{\ln^2\left(\frac{1}{\epsilon}\right)}.$$}
\end{theorem}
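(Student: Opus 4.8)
The plan is to carve $L_1'$ out of $L_1$ by the standard peeling procedure for extracting expanding subgraphs (see \cite{expanders}): repeatedly delete small sets of poor expansion, and show that so few vertices are deleted that Property 2 of Theorem \ref{vertex-expansion} still governs the remainder. From now on we work on the \textbf{whp} event that $|L_1|=(1+o_\epsilon(1))2\epsilon n$ (Theorem \ref{FKM-thm}) and that Theorem \ref{vertex-expansion} holds; let $c_0$ be the constant it provides and set $\alpha:=\frac{c_0\epsilon^2}{2\ln^2(1/\epsilon)}$. Call a set $S$ \emph{bad} in a subgraph $H$ of $G_p[L_1]$ if $1\le |S|\le \frac{\epsilon^2 n}{50}$ and $|N_H(S)|<\alpha|S|$. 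Put $B_0:=\emptyset$, $H_0:=G_p[L_1]$; while $H_i$ contains a bad set $S_{i+1}$, set $B_{i+1}:=B_i\cup S_{i+1}$ and $H_{i+1}:=G_p[L_1\setminus B_{i+1}]$. The process halts; let $B$ be the final $B_i$ and $L_1':=L_1\setminus B$, so that $G_p[L_1']$ contains no bad set.

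The first thing I would prove is that $|N_{G_p}(B_i)|<\alpha|B_i|$ for every $i$ (neighbourhoods of subsets of $L_1$ are taken inside $L_1$). This is a one-line telescoping argument: any $v\in N_{G_p}(B_i)$ is adjacent to some $u\in S_j$ with $j\le i$, and since $v\notin B_i\supseteq B_{j-1}$ it still belongs to $H_{j-1}$, hence $v\in N_{H_{j-1}}(S_j)$; as the sets $S_j$ are pairwise disjoint and each is bad in $H_{j-1}$, summing over $j$ gives the claim. Next I would deduce that $|B|<\frac{\epsilon^2 n}{50}$: otherwise, since each step enlarges $B_i$ by at most $\frac{\epsilon^2 n}{50}$ vertices, some $B_i$ would satisfy $\frac{\epsilon^2 n}{50}\le |B_i|\le \frac{\epsilon^2 n}{25}\le \frac{12\epsilon n}{11}$, and then Property 2 would give $|N_{G_p}(B_i)|\ge \frac{c_0\epsilon^2|B_i|}{\ln^2(1/\epsilon)}=2\alpha|B_i|$, contradicting the telescoping bound. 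Hence $|L_1'|=|L_1|-|B|\ge (1+o_\epsilon(1))2\epsilon n-\frac{\epsilon^2 n}{50}\ge \frac{7\epsilon n}{4}$ for $\epsilon$ small enough.

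Finally I would verify expansion in $L_1'$. Take $S\subseteq L_1'$ with $1\le |S|\le |L_1'|/2$. If $|S|\le \frac{\epsilon^2 n}{50}$, then $S$ is not bad in $G_p[L_1']$, so directly $|N_{L_1'}(S)|\ge \alpha|S|$. If $\frac{\epsilon^2 n}{50}\le |S|\le |L_1'|/2$, then, since $|L_1'|/2\le (1+o_\epsilon(1))\epsilon n$ and $|B|<\frac{\epsilon^2 n}{50}$, the set $S\cup B\subseteq L_1$ has size in $[\frac{\epsilon^2 n}{50},\frac{12\epsilon n}{11}]$, so Property 2 gives $|N_{G_p}(S\cup B)|\ge \frac{c_0\epsilon^2|S\cup B|}{\ln^2(1/\epsilon)}\ge 2\alpha|S|$; moreover every vertex of $N_{G_p}(S\cup B)$ lies in $L_1'$ and is adjacent to $S$ or to $B$, i.e.\ $N_{G_p}(S\cup B)\subseteq N_{L_1'}(S)\cup N_{G_p}(B)$, so with the telescoping bound and $|B|<\frac{\epsilon^2 n}{50}\le |S|$ we obtain $|N_{L_1'}(S)|\ge 2\alpha|S|-\alpha|B|>\alpha|S|$. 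In either case $|N_{L_1'}(S)|\ge \alpha|S|$, which is the assertion with the absolute constant taken to be $c:=c_0/2$. The remaining points are routine size comparisons verifying that the relevant sets lie inside the window $[\frac{\epsilon^2 n}{50},\frac{12\epsilon n}{11}]$ of Property 2 once $\epsilon$ is small. The one place a careless attempt breaks down — and essentially the only real idea — is the linear-sized case: one must not subtract the whole of $B$ from $N_{L_1'}(S)$, but applying Property 2 to $S\cup B$ rather than to $S$ loses only the $\alpha|B|$ neighbours of $B$, which the peeling keeps comfortably below $\alpha|S|$.
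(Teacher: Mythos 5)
Your proof is correct, and it reaches the conclusion by a genuinely different (though related) decomposition than the paper. The paper takes a single \emph{maximal} set $M\subseteq L_1$ with $|M|<\frac{\epsilon n}{6}$ and $|N_{G_p}(M)|<\frac{c\epsilon^2|M|}{\ln^2(1/\epsilon)}$, sets $L_1'=L_1\setminus M$, and derives a contradiction from a hypothetical bad $B\subseteq L_1'$ by applying Property 2 of Theorem \ref{vertex-expansion} to $M\cup B$ together with the size bound $|M\cup B|\le\frac{|V(L_1)|+|M|}{2}\le\frac{13\epsilon n}{12}<\frac{12\epsilon n}{11}$; this forces the deleted set to be controlled only up to $\frac{\epsilon n}{6}$, whence the constant $\frac{7}{4}$. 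You instead peel off small ($\le\frac{\epsilon^2 n}{50}$) non-expanding sets iteratively and use the telescoping bound $|N_{G_p}(B_i)|<\alpha|B_i|$ to stop the peeling already at $|B|<\frac{\epsilon^2 n}{50}$, then handle linear-sized $S\subseteq L_1'$ separately by applying Property 2 to $S\cup B$ and discarding only the $\alpha|B|$ neighbours of $B$. Both arguments lean on exactly the same external input (Property 2 of Theorem \ref{vertex-expansion} for sets in the window $[\frac{\epsilon^2 n}{50},\frac{12\epsilon n}{11}]$, plus $|L_1|\le 2\epsilon n$ from Theorem \ref{FKM-thm}), and all your size checks go through. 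What your version buys is a quantitatively smaller excised set: $|B|=O(\epsilon^2 n)$ rather than up to $\frac{\epsilon n}{6}$, so you could conclude $|L_1'|\ge(2-o_\epsilon(1))\epsilon n$ rather than $\frac{7\epsilon n}{4}$ (consistent with the paper's remark that $\frac{7}{4}$ can be pushed to any constant below $2$); the price is the extra case analysis for $|S|>\frac{\epsilon^2 n}{50}$, which the paper's one-shot maximality argument folds into a single contradiction.
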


\begin{remark}
The fraction $\frac{7}{4}$ in Theorem \ref{expanding-subgraph} can be replaced by any constant strictly smaller than $2$, while adjusting the statements and proofs of the other theorems as well. It is left as such for ease of presentation.
\end{remark}

From the typical existence of a large, relatively good expander as a subgraph, one can obtain several other interesting properties of the giant component, such as the existence \textbf{whp} of a large complete minor, a long cycle and cycles of different lengths. We refer the reader to \cite{expanders} for a survey including many results of this type, and to \cite{HLW} for an extensive survey on expanders and their applications.

From the expansion properties of the giant component, we will be able to derive an asymptotic bound on its diameter:
\begin{theorem} \label{diameter}
\textit{Let $\epsilon>0$ be a small enough constant and let $\delta>0$ be such that $\delta\le \epsilon^4$. Let $G=(V,E)$ be an $(n,d,\lambda)$-graph with $\frac{\lambda}{d}\le \delta$. Let $p=\frac{1+\epsilon}{d}$. Form $G_p$ by retaining every edge of $G$ independently with probability $p$. Let $L_1$ be the largest component of $G_p$. Then, there exists an absolute constant $C$ such that \textbf{whp} the diameter of $L_1$ is at most $$C\frac{\ln \left(\frac{1}{\epsilon}\right)\ln n}{\epsilon^2}.$$}
\end{theorem}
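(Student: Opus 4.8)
The plan is to read off Theorem~\ref{diameter} from the two-regime vertex-expansion of Theorem~\ref{vertex-expansion} via the usual ``grow a breadth-first-search ball from each endpoint and meet in the middle'' argument. Throughout, condition on the \textbf{whp} events that $G_p$ satisfies the conclusion of Theorem~\ref{vertex-expansion} and that $|L_1|=(1+o_\epsilon(1))2\epsilon n$ (Theorem~\ref{FKM-thm}); in particular, for $\epsilon$ small enough we may assume $\tfrac{|L_1|}{2}<\tfrac{12\epsilon n}{11}$. Fix $u,v\in L_1$ and for $t\ge0$ put $B_u(t):=B_{G_p}(u,t)$ and $B_v(t):=B_{G_p}(v,t)$; these are connected subsets of $L_1$, and since every vertex of $N_{G_p}(S)$ lies in $L_1$ whenever $S\subseteq L_1$, one has $B_u(t+1)=B_u(t)\cup N_{G_p}(B_u(t))$ with $N_{G_p}(B_u(t))$ exactly the neighbourhood bounded below in Theorem~\ref{vertex-expansion} (and similarly for $v$).

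First I would run a cheap ``seeding'' phase: while $|B_u(t)|<\tfrac{16\ln n}{\epsilon^2}$ Theorem~\ref{vertex-expansion} gives nothing, but $B_u(t)$ is then a proper connected subset of the connected graph $L_1$, so an edge of $G_p$ leaves it and it acquires at least one new vertex at each step; hence $|B_u(t_0)|\ge\tfrac{16\ln n}{\epsilon^2}$ for $t_0:=\lceil\tfrac{16\ln n}{\epsilon^2}\rceil$. Next, as long as $\tfrac{16\ln n}{\epsilon^2}\le|B_u(t)|\le\tfrac{\epsilon^2 n}{50}$, Property~1 of Theorem~\ref{vertex-expansion} applies (the ball is connected and of admissible size) and yields $|B_u(t+1)|\ge\bigl(1+\tfrac{c\epsilon^2}{\ln(1/\epsilon)}\bigr)|B_u(t)|$; since $\ln(1+x)\ge x/2$ for small $x$, after at most
\begin{align*}
t_1\;:=\;\frac{2\ln(1/\epsilon)}{c\epsilon^2}\,\ln\!\left(\frac{\epsilon^4 n}{800\ln n}\right)\;\le\;\frac{2\ln(1/\epsilon)\ln n}{c\epsilon^2}
\end{align*}
further steps the size exceeds $\tfrac{\epsilon^2 n}{50}$. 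Finally, as long as $\tfrac{\epsilon^2 n}{50}\le|B_u(t)|\le\tfrac{12\epsilon n}{11}$, Property~2 gives $|B_u(t+1)|\ge\bigl(1+\tfrac{c\epsilon^2}{\ln^2(1/\epsilon)}\bigr)|B_u(t)|$; only a bounded factor $O(1/\epsilon)$ of growth is needed to pass $\tfrac{12\epsilon n}{11}$, so $t_2:=O\!\bigl(\tfrac{\ln^3(1/\epsilon)}{\epsilon^2}\bigr)=O_\epsilon(1)$ further steps suffice, and the ball does reach such sizes because it eventually exhausts $L_1$ (which has $\sim2\epsilon n>\tfrac{12\epsilon n}{11}$ vertices). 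Writing $T:=t_0+t_1+t_2$, we obtain $|B_u(T)|>\tfrac{12\epsilon n}{11}>\tfrac{|L_1|}{2}$, and likewise $|B_v(T)|>\tfrac{|L_1|}{2}$.

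Since $B_u(T)$ and $B_v(T)$ are subsets of $L_1$ of size more than $\tfrac{|L_1|}{2}$ each, they intersect, so $\mathrm{dist}_{G_p}(u,v)\le 2T$. As $u,v\in L_1$ were arbitrary, and as the seeding term $\tfrac{16\ln n}{\epsilon^2}$ and the (constant-in-$n$) term $t_2$ are both $O\!\bigl(\tfrac{\ln(1/\epsilon)\ln n}{\epsilon^2}\bigr)$ for $n$ large, we get $T=O\!\bigl(\tfrac{\ln(1/\epsilon)\ln n}{\epsilon^2}\bigr)$ and hence $\mathrm{diam}(L_1)\le 2T\le C\,\tfrac{\ln(1/\epsilon)\ln n}{\epsilon^2}$ for an absolute constant $C$.

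I do not expect a serious obstacle: the theorem is essentially a corollary of Theorem~\ref{vertex-expansion}, and the step that would naively look dangerous — bounding how long the ball takes to reach the threshold $\tfrac{16\ln n}{\epsilon^2}$ below which there is no expansion guarantee — is handled for free by connectivity of $L_1$ and costs only the negligible $\tfrac{16\ln n}{\epsilon^2}$ term. The genuinely fiddly part, and where I would be careful, is the bookkeeping at the two regime boundaries: one must check that a single expansion step cannot leap over the window $[\tfrac{\epsilon^2 n}{50},\tfrac{12\epsilon n}{11}]$ without already engulfing more than half of $L_1$, which is precisely why the upper threshold $\tfrac{12\epsilon n}{11}$ of Property~2 must exceed $\tfrac{|L_1|}{2}$ — a point that relies on the size estimate of Theorem~\ref{FKM-thm} and on taking $\epsilon$ sufficiently small.
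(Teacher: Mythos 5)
Your proposal is correct and follows essentially the same route as the paper: a seeding phase of $O(\ln n/\epsilon^2)$ steps using connectivity of $L_1$, then exponential growth of the ball via Property 1 and then Property 2 of Theorem~\ref{vertex-expansion}, and finally the observation that two balls each larger than $\frac{12\epsilon n}{11}>\frac{|L_1|}{2}$ must intersect. Your bookkeeping of the $O_\epsilon(1)$ final phase and of the regime boundaries is if anything slightly more explicit than the paper's.
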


For $G(n,p)$ with $p=\frac{1+\epsilon}{n}$, the diameter of the giant component is known to be typically $\Theta\left(\frac{\ln n}{\epsilon}\right)$ (see the results of Fernholz and Ramachandran \cite{diameter2} and of Riordan and Wormald \cite{diameter}). Theorem \ref{diameter} is within an $\tilde{\Omega}\left(\frac{1}{\epsilon}\right)$ factor from that.

In \cite{FR2}, Fountoulakis and Reed obtained a good control over the typical edge-expansion of connected subsets of the giant component in $G(n,p)$ and utilised it in order to bound the mixing time of a random walk on the giant component. Adjusting their method to our setting, we obtain:
\begin{theorem} \label{mixing-time}
\textit{Let $\epsilon>0$ be a small enough constant and let $\delta>0$ be such that $\delta\le \epsilon^4$. Let $G=(V,E)$ be an $(n,d,\lambda)$-graph with $\frac{\lambda}{d}\le \delta$. Let $p=\frac{1+\epsilon}{d}$. Form $G_p$ by retaining every edge of $G$ independently with probability $p$. Let $L_1$ be the largest component of $G_p$. Then, there exists an absolute constant $C$ such that \textbf{whp} the mixing time of the lazy random walk on $L_1$ is at most
\begin{align*}
    C\frac{\ln^2n}{\epsilon^4}.
\end{align*}}
\end{theorem}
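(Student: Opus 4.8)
The plan is to estimate the mixing time through the conductance of $L_1$, but the naive Cheeger / conductance-profile bound cannot be applied directly: as pointed out after Theorem~\ref{vertex-expansion} (and as already happens in $G(n,p)$), arbitrary --- in particular disconnected --- sublinear subsets of $L_1$ need not expand, the prototypical low-conductance sets being long pendant trees and long chains of degree-$2$ vertices. Following the method of Fountoulakis and Reed \cite{FR2} (see also \cite{BKW}), I would split $L_1$ into an expanding ``core'' $\mathcal C$ together with ``appendages'' of bounded depth, bound the mixing time on $\mathcal C$ via its conductance, and show the walk escapes every appendage quickly; the theorem then follows by combining the ``escape'' with the ``mixing on the core''.

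The first step is a structural description of $L_1$: whp one can write $L_1 = \mathcal C \cup \mathcal D$, where $\mathcal C$ --- morally, the $2$-core of $L_1$ with all of its bare paths contracted to single edges (weighted by the inverse of their length) and its pendant trees removed --- has minimum degree at least $3$ and contains no subgraph on few vertices of edge-density exceeding $\tfrac54$, while every vertex of $L_1$ lies within distance $O\!\left(\tfrac{\log n}{\epsilon^2}\right)$ of $\mathcal C$, since whp every pendant tree hanging off the $2$-core has $O\!\left(\tfrac{\log n}{\epsilon^2}\right)$ vertices and every bare path has $O(\log n)$ vertices. All of these are first/second-moment computations on $G_p$ in the spirit of those in \cite{FKM}: for instance a pendant tree (resp.\ a bare path) on $\ell$ vertices is present in expectation $O\!\left(\tfrac{\epsilon^2 n}{\ell^{3/2}}\left((1+\epsilon)e^{-\epsilon}\right)^{\ell}\right)$ (resp.\ $O\!\left(n\left((1+\epsilon)e^{-(1+\epsilon)}\right)^{\ell}\right)$) times, which is $o(1)$ once $\ell\gg\tfrac{\log n}{\epsilon^2}$ (resp.\ $\ell\gg\log n$).

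Next I would lower bound the conductance of $\mathcal C$, aiming at $\Phi(\mathcal C)\ge\gamma$ with $\gamma=\tilde\Omega(\epsilon^2)$. Since the edge boundary of a set is additive over the connected components it induces, it suffices to consider connected $S\subseteq\mathcal C$. For $|S|\le\tfrac{16\ln n}{\epsilon^2}$ one uses that a connected subgraph of a minimum-degree-$3$ graph with excess $x$ has edge boundary at least $|S|+2-2x$, together with the whp absence of small dense subgraphs, to get $|\partial_{\mathcal C}(S)|=\Omega(|S|)=\Omega(\mathrm{vol}_{\mathcal C}(S))$; for larger connected $S$ --- which, because $|\mathcal C|<\tfrac{\epsilon^2 n}{50}$, still lie in the range covered by Part~1 of Theorem~\ref{vertex-expansion} once one passes to the corresponding subset of the $2$-core --- one converts the $\tilde\Omega(\epsilon^2)$ vertex-expansion in $G_p$ into the claimed edge-expansion inside $\mathcal C$. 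Consequently the lazy walk on $\mathcal C$ has $\pi_{\min}\ge 1/\mathrm{vol}(\mathcal C)\ge n^{-2}$, and the standard conductance bound for mixing times gives $t_{\mathrm{mix}}(\mathcal C)=O(\gamma^{-2}\log n)=\tilde O(\epsilon^{-4}\log n)$.

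Finally I would assemble the two ingredients. From any starting vertex, the walk reaches $\mathcal C$ within $T_0 = O\!\left(\epsilon^{-4}\log^2 n\right)$ steps with probability at least $0.99$: the first-passage time to $\mathcal C$ from a vertex $v$ is dominated by a hitting time inside the tree-like appendage containing $v$, which has $O\!\left(\tfrac{\log n}{\epsilon^2}\right)$ vertices, hence is $O\!\left(\left(\tfrac{\log n}{\epsilon^2}\right)^2\right)=O\!\left(\epsilon^{-4}\log^2 n\right)$, and Markov's inequality finishes it; this is where the $\epsilon^{-4}$ originates. Running for another $T_0\gg t_{\mathrm{mix}}(\mathcal C)$ steps and arguing, as in \cite{FR2}, that once the walk has reached $\mathcal C$ it mixes on $\mathcal C$ while the conditional distribution on the appendages stays slaved to the (near-stationary) distribution on $\mathcal C$, one obtains $t_{\mathrm{mix}}(L_1)=O\!\left(\epsilon^{-4}\log^2 n\right)$ (the clean form absorbing the $\log(1/\epsilon)$ factors, as $\epsilon$ is constant). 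I expect the main obstacle to be exactly this last step --- controlling the excursions the walk makes back into the appendages after first reaching $\mathcal C$, and showing they do not spoil mixing --- which is the technical heart of \cite{FR2} and needs either a comparison-of-Markov-chains / congestion argument or a direct bound on the $L^2$-distance to stationarity; the structural facts and the conductance estimate, by contrast, are routine moment computations together with applications of Theorems~\ref{vertex-expansion}--\ref{edge-expansion}. A secondary, genuinely fiddly point is the precise conversion of the $G_p$-expansion of Theorems~\ref{vertex-expansion}--\ref{edge-expansion} into edge-expansion of $\mathcal C$, where one must keep careful track of which edges leaving a set travel along a bare path back into the set.
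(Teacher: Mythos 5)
Your outline follows the core--appendage decomposition of Benjamini--Kozma--Wormald \cite{BKW} rather than the route the paper actually takes, and the step you yourself flag as ``the technical heart'' is a genuine gap, not a routine verification. Knowing that the walk escapes every appendage in $O(\epsilon^{-4}\log^2 n)$ steps and that the walk restricted to $\mathcal C$ mixes in $\tilde O(\epsilon^{-4}\log n)$ steps does not by itself bound $t_{mix}(L_1)$: the walk on $L_1$ is not the walk on $\mathcal C$ time-changed in any clean way, the stationary distribution of $L_1$ puts constant mass on the appendages, and controlling the repeated excursions into appendages after the first hit of $\mathcal C$ requires a full comparison/decomposition-of-chains argument that you have not supplied. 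There are also quantitative problems earlier: the $2$-core of $L_1$ has $\Theta(\epsilon^2 n)$ vertices with a constant that need not be below $\frac{1}{50}$, so connected subsets of $\mathcal C$ do not automatically fall in the range of Part 1 of Theorem \ref{vertex-expansion}; and all the structural facts about pendant trees, bare paths and small dense subgraphs of the $2$-core, while true for $G(n,p)$, would have to be proved from scratch for $(n,d,\lambda)$-graphs.

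The paper avoids all of this by invoking Theorem \ref{mixing-time-tool} (Theorem 1 of \cite{FR1}), which bounds $t_{mix}\le K\sum_{j}\Phi^{-2}(2^{-j})$ where $\Phi(2^{-j})$ is the minimum conductance over \emph{connected} sets $S$ with $2^{-j-1}\le\pi(S)\le 2^{-j}$. Restricting to connected sets removes both of your obstructions at once. For the $O(\log\log n + \log(1/\epsilon))$ dyadic scales with $2^{-j}\le \frac{2C\ln n}{\epsilon^3 n}$, the trivial bound $\Phi(S)\ge \frac{1}{4e(L_1)\pi(S)}\ge \frac{2^{j}}{12\epsilon n}$ (valid since $L_1$ is connected and, \textbf{whp}, $e(L_1)\le 3\epsilon n$) already gives $\sum_{j\in J}\Phi^{-2}(2^{-j})=O\left(\frac{\ln^2 n}{\epsilon^4}\right)$ --- this is where the $\ln^2 n$ comes from, with no hitting-time analysis needed. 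For the remaining $<\log_2 n$ scales, Lemma \ref{mixing-time-lemma2} shows every connected $S$ with $\pi(S)\ge\frac{C\ln n}{\epsilon^3 n}$ satisfies $\frac{16\ln n}{\epsilon^2}\le|S|\le\frac{12\epsilon n}{11}$ (using Lemmas \ref{incident-edges} and \ref{mixing-time-lemma1} and the excess bound on $L_1$), whence $e(S,S^C)\ge \frac{c'\epsilon^2|S|}{\ln(1/\epsilon)}$ by Lemma \ref{edge-expansion-connected-sets} and $2e(S)+e(S,S^C)\le 20|S|$, giving $\Phi(S)\ge \frac{c\epsilon^2}{\ln(1/\epsilon)}$ and a contribution of $O\left(\frac{\log n\,\ln^2(1/\epsilon)}{\epsilon^4}\right)$. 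If you want to salvage your approach you would essentially have to reprove the decomposition theorem of \cite{BKW} in this setting; the shorter path is the conductance-profile theorem of \cite{FR1}, which is also what \cite{FR2} uses.
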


For the supercritical $G(n,p)$, Fountoulakis and Reed \cite{FR2}, and independently Benjamini, Kozma and Wormald \cite{BKW} established the asymptotic order of magnitude of the mixing time of a random walk on the giant component is asymptotically $\Theta\left(\log^2n\right)$. Thus, the order of magnitude (in $n$) of our estimate in Theorem \ref{mixing-time} is optimal.

In Section $2$ we state and establish several lemmas which we will use throughout the proofs, as well as the key lemma for proving Theorems \ref{vertex-expansion} and \ref{edge-expansion} (Lemma \ref{BFS}), utilising a random variant of the Breadth First Search algorithm. In Section $3$ we prove Theorems \ref{vertex-expansion}, \ref{edge-expansion}, and \ref{expanding-subgraph}. Theorem \ref{diameter} is proven in Section $4$, and Theorem \ref{mixing-time} is proven in Section $5$.

\section{Auxiliary Lemmas}
We denote by $e_G(A,B)$ the number of edges in $G$ with one endpoint in $A$ and the other endpoint in $B$. When $A=B$, we write for short $e_G(A):=\frac{e_G(A,A)}{2}$. Note that $e_G(A)$ is the number of edges inside $A$. When the graph we refer to is obvious, we sometimes omit the subscript.

We begin by stating a couple of lemmas which we will use throughout the proofs. The first one is the famed \textit{expander mixing lemma} due to Alon and Chung \cite{EML} (which we state in the same form as Theorem 2.11 of \cite{pseudo-survey}), which gives us control over the edge-distribution of the graph through its spectral ratio:
\begin{lemma}[\cite{EML}] \label{eml}
\textit{Let $G=(V,E)$ be an $(n,d,\lambda)$-graph. Then, for every two subsets $A,B\subseteq V$, $$\Bigg|e(A,B)-\frac{d}{n}|A||B|\Bigg|\le\lambda \sqrt{|A||B|\left(1-\frac{|A|}{n}\right)\left(1-\frac{|B|}{n}\right)}.$$}
\end{lemma}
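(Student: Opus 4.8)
The plan is to run the standard spectral argument: realise $e(A,B)$ as a bilinear form in the adjacency matrix evaluated on the characteristic vectors of $A$ and $B$, peel off the contribution of the Perron (all-ones) eigenvector, and control the remaining sum by $\lambda$ together with a Cauchy--Schwarz estimate. So first I would let $M$ be the adjacency matrix of $G$ and let $\mathbf{1}_A,\mathbf{1}_B\in\mathbb{R}^V$ be the characteristic vectors of $A$ and $B$. Since $M$ is real symmetric it has an orthonormal eigenbasis $v_1,\dots,v_n$ with $Mv_i=\lambda_i v_i$; as $G$ is $d$-regular we may take $v_1=\tfrac{1}{\sqrt n}\mathbf{1}$ with $\lambda_1=d$, and by definition of $\lambda$ we have $|\lambda_i|\le\lambda$ for every $i\ge 2$. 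One checks directly that $\mathbf{1}_A^{\top}M\,\mathbf{1}_B=\sum_{u\in A,\,v\in B}M_{uv}=e(A,B)$ (edges inside $A\cap B$ being counted twice, consistently with the convention $e(A)=e(A,A)/2$).

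Next I would expand $\mathbf{1}_A=\sum_i\alpha_i v_i$ and $\mathbf{1}_B=\sum_i\beta_i v_i$, so that $\alpha_1=\langle\mathbf{1}_A,v_1\rangle=\tfrac{|A|}{\sqrt n}$, $\beta_1=\tfrac{|B|}{\sqrt n}$, while $\sum_i\alpha_i^2=\|\mathbf{1}_A\|^2=|A|$ and $\sum_i\beta_i^2=|B|$. Diagonalising gives $e(A,B)=\sum_i\lambda_i\alpha_i\beta_i=\tfrac{d}{n}|A||B|+\sum_{i\ge2}\lambda_i\alpha_i\beta_i$, hence
\[
\Bigl|e(A,B)-\tfrac{d}{n}|A||B|\Bigr|=\Bigl|\sum_{i\ge2}\lambda_i\alpha_i\beta_i\Bigr|\le\lambda\sum_{i\ge2}|\alpha_i|\,|\beta_i|\le\lambda\sqrt{\sum_{i\ge2}\alpha_i^2}\sqrt{\sum_{i\ge2}\beta_i^2},
\]
using $|\lambda_i|\le\lambda$ for $i\ge2$ and Cauchy--Schwarz. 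Since $\sum_{i\ge2}\alpha_i^2=|A|-\alpha_1^2=|A|\bigl(1-\tfrac{|A|}{n}\bigr)$ and likewise $\sum_{i\ge2}\beta_i^2=|B|\bigl(1-\tfrac{|B|}{n}\bigr)$, substituting these in yields exactly the asserted inequality.

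I do not expect a genuine obstacle here; the argument is short and purely linear-algebraic. The only points that need a little care are the bookkeeping of what $\mathbf{1}_A^{\top}M\,\mathbf{1}_B$ counts when $A$ and $B$ overlap (this does not affect the inequality), and ensuring that the spectral bound $|\lambda_i|\le\lambda$ is applied only after the rank-one Perron term has been removed --- it is precisely this removal that is responsible for producing the $\bigl(1-\tfrac{|A|}{n}\bigr)$ and $\bigl(1-\tfrac{|B|}{n}\bigr)$ factors on the right-hand side, rather than the cruder $\lambda\sqrt{|A||B|}$.
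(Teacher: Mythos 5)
Your argument is correct and complete: it is the standard spectral proof of the expander mixing lemma (diagonalise the adjacency matrix, split off the Perron component $\frac{d}{n}|A||B|$, and bound the remainder by $\lambda$ via Cauchy--Schwarz, with the $\bigl(1-\frac{|A|}{n}\bigr)\bigl(1-\frac{|B|}{n}\bigr)$ factors coming precisely from subtracting $\alpha_1^2$ and $\beta_1^2$). The paper does not prove this lemma at all --- it is quoted from Alon and Chung (in the form given in the Krivelevich--Sudakov survey) --- so there is nothing to compare against; your write-up would serve as a faithful proof of the cited result.
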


The next lemma bounds the number of subtrees of a given order in a $d$-regular graph:
\begin{lemma} \label{trees}
\textit{Let $t_k(G)$ denote the number of $k$-vertex trees contained in $d$-regular graph $G$ on $n$ vertices. Then,
$$t_k(G)\le n\frac{k^{k-2}d^{k-1}}{k!}.$$}
\end{lemma}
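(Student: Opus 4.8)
The plan is to bound $t_k(G)$ by first choosing the (unlabelled) isomorphism type of the tree, then embedding it into $G$ vertex by vertex while exploiting $d$-regularity. First I would recall the classical Cayley-type count: the number of trees on a fixed vertex set of size $k$, equivalently the number of rooted labelled trees divided by $k$, is governed by $k^{k-2}$ (Cayley's formula). The cleanest route is to count \emph{rooted} labelled trees on $[k]$, of which there are $k^{k-1}$, and to set up an injection from (copy of a $k$-vertex tree in $G$, choice of root) into a bounded embedding process.

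The key step is the embedding count. Fix a rooted labelled tree $T$ on $[k]$ and process its vertices in a breadth-first (or any parent-before-child) order $v_1 = \text{root}, v_2, \dots, v_k$. To embed $T$ into $G$: there are at most $n$ choices for the image of the root $v_1$; and for each subsequent vertex $v_i$ ($i \ge 2$), its parent has already been embedded, and since $G$ is $d$-regular the image of $v_i$ must be one of the at most $d$ neighbours of the image of its parent. Hence each rooted labelled tree $T$ embeds into $G$ in at most $n d^{k-1}$ ways. Summing over the $k^{k-1}$ rooted labelled trees on $[k]$, the number of pairs (rooted labelled copy of some $k$-vertex tree in $G$) is at most $n k^{k-1} d^{k-1}$. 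Each unlabelled, unrooted $k$-vertex subtree of $G$ is counted here once for each of its $k!$ vertex-labellings and each of its $k$ root choices — wait, more carefully: a subtree (as a subgraph of $G$ on a fixed $k$-subset of $V(G)$ with fixed edge set) corresponds to $k!$ labelled trees on $[k]$ only after we also range over which labelled tree structure it realises; the honest bookkeeping is that a fixed subgraph-copy, together with a bijection from $[k]$ to its vertex set, gives a labelled tree, and there are $k!$ such bijections, and independently $k$ choices of root, so each subgraph-copy is counted $k! \cdot k / k = k!$ times in the rooted-labelled count (the root is already implicit in "rooted labelled tree on $[k]$ with a chosen root", so the overcount per subgraph-copy is exactly $k!$). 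Therefore
\[
t_k(G) \le \frac{n k^{k-1} d^{k-1}}{k!} \cdot \frac{1}{k} = n\frac{k^{k-2} d^{k-1}}{k!},
\]
where the extra factor $1/k$ comes from Cayley's formula: the number of \emph{unrooted} labelled trees on $[k]$ is $k^{k-2}$, not $k^{k-1}$, so it is cleaner to start from unrooted labelled trees ($k^{k-2}$ of them), pick any embedding by choosing an image for one designated vertex ($n$ ways) and then propagating along a spanning order ($d^{k-1}$ ways), and divide by the $k!$ labellings of each subgraph-copy.

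The main (very mild) obstacle is purely the combinatorial bookkeeping: being careful about the distinction between labelled trees on $[k]$, rooted versus unrooted, and copies-as-subgraphs, so that the factor $k^{k-2}$ (rather than $k^{k-1}$ or $k^k$) appears and the division by $k!$ is exactly justified. There is no probabilistic content and no reliance on the pseudo-randomness of $G$ — only $d$-regularity is used, and only to bound the branching factor at each embedding step by $d$. I would write the final argument in the unrooted form: Cayley gives $k^{k-2}$ labelled trees on $[k]$; each extends to an embedding into $G$ in at most $n d^{k-1}$ ways (one vertex free, the rest forced into a $d$-neighbourhood along a parent-first order); each unlabelled subgraph-copy arises from exactly $k!$ of these labelled embeddings; divide to conclude.
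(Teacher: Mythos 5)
Your proof is correct, and it is the standard argument: Cayley's formula gives $k^{k-2}$ labelled trees on $[k]$, each embeds into a $d$-regular graph in at most $n\,d^{k-1}$ ways by fixing one vertex and propagating along a parent-first order, and each subgraph copy is counted exactly $k!$ times, yielding the bound. The paper does not prove this lemma itself but cites Lemma 2.1 of Beveridge, Frieze and McDiarmid, where essentially this same counting argument appears; your final ``unrooted'' formulation is the clean version, and the brief detour through rooted trees (overcount $k!\cdot k$ against $k^{k-1}$ rooted labelled trees) lands on the same answer, so there is no gap.
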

This follows directly from Lemma 2.1 of \cite{trees}.

Let $D_{k}(G):=\left\{v\in V(G):d_G(v)>k\right\}$. We will use the following lemma, which bounds the typical number of edges incident to high degree vertices in $G_p$:
\begin{lemma} \label{high-degree}
\textit{Let $G=(V,E)$ be a $d$-regular graph on $n$ vertices. Let $\epsilon>0$ be a small enough constant and let $p\le \frac{2}{d}$. Form $G_p$ by retaining each edge of $G$ with probability $p$. Let $X$ be the random variable
\begin{align*}
    X=\sum_{v\in D_{5\ln\left(\frac{1}{\epsilon}\right)}(G_p)}d_{G_p}(v).
\end{align*}
Then, \textbf{whp}, 
$$X\le n\epsilon^{\ln\ln\left(\frac{1}{\epsilon}\right)}.$$}
\end{lemma}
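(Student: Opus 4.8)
The plan is to show that the random variable $X$ is typically small via a first-moment argument, controlling the contribution of each possible "high-degree" vertex separately.

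First I would fix $k := 5\ln(1/\epsilon)$ and estimate, for a fixed vertex $v$, the probability that $d_{G_p}(v) \ge j$ for $j > k$. Since $v$ has exactly $d$ neighbours in $G$ and each incident edge survives independently with probability $p \le 2/d$, the degree $d_{G_p}(v)$ is stochastically dominated by $\mathrm{Bin}(d, 2/d)$, which has mean at most $2$. Standard Chernoff-type bounds (or directly estimating $\binom{d}{j} p^j \le \left(\frac{edp}{j}\right)^j \le \left(\frac{2e}{j}\right)^j$) give $\Pr[d_{G_p}(v) \ge j] \le \left(\frac{2e}{j}\right)^j$, which for $j \ge k = 5\ln(1/\epsilon)$ is extremely small — roughly of order $\epsilon^{\Omega(j\ln\ln(1/\epsilon))}$ once $j$ exceeds a large multiple of $\ln(1/\epsilon)$, since then $\ln(j/2e) \ge \Omega(\ln\ln(1/\epsilon))$.

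Next I would bound $\mathbb{E}[X]$. Writing $X = \sum_v d_{G_p}(v)\,\mathbf{1}[d_{G_p}(v) > k]$, we have
\begin{align*}
\mathbb{E}[X] = \sum_{v \in V} \mathbb{E}\Big[d_{G_p}(v)\,\mathbf{1}[d_{G_p}(v) > k]\Big] = \sum_{v\in V}\sum_{j > k} \Pr[d_{G_p}(v) = j]\cdot j \le n \sum_{j > k} j\left(\frac{2e}{j}\right)^{j}.
\end{align*}
The sum over $j > k$ is dominated by its first term (the summands decay super-geometrically), so $\mathbb{E}[X] \le n\cdot O\!\left(k \left(\frac{2e}{k}\right)^{k}\right)$. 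Plugging in $k = 5\ln(1/\epsilon)$ and simplifying the exponent — $\left(\frac{2e}{5\ln(1/\epsilon)}\right)^{5\ln(1/\epsilon)} = \exp\!\big(-5\ln(1/\epsilon)(\ln\ln(1/\epsilon) + O(1))\big)$ — one gets $\mathbb{E}[X] \le n\,\epsilon^{(5+o(1))\ln\ln(1/\epsilon)}$, which is comfortably below $\frac{1}{2}n\,\epsilon^{\ln\ln(1/\epsilon)}$ for $\epsilon$ small enough. Markov's inequality then gives $\Pr\!\left[X > n\,\epsilon^{\ln\ln(1/\epsilon)}\right] \le \frac{\mathbb{E}[X]}{n\,\epsilon^{\ln\ln(1/\epsilon)}} = o(1)$, which is exactly the claim.

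The only mildly delicate point is bookkeeping the exponent: one must verify that the gap between the exponent $5\ln\ln(1/\epsilon)$ coming from $k = 5\ln(1/\epsilon)$ and the target exponent $\ln\ln(1/\epsilon)$ in the statement genuinely absorbs all the lower-order terms (the factor $j$, the constant $2e$, the geometric tail of the sum, and the slack needed for Markov) for all sufficiently small constant $\epsilon$. This is a routine asymptotic check since $\ln\ln(1/\epsilon) \to \infty$ as $\epsilon \to 0$, so I do not anticipate any real obstacle — the argument is essentially a clean union bound over vertices combined with a binomial tail estimate. One should also note that no pseudo-randomness of $G$ is used here; $d$-regularity alone suffices, which is consistent with the hypotheses of the lemma.
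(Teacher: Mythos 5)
Your first-moment computation is essentially correct: $\Pr[d_{G_p}(v)\ge j]\le\left(\frac{2e}{j}\right)^j$, the tail sum is dominated by its first term, and indeed $\mathbb{E}[X]\le n\,\epsilon^{(5-o(1))\ln\ln(1/\epsilon)}$, comfortably below the target $n\,\epsilon^{\ln\ln(1/\epsilon)}$. The gap is in the very last step. Markov's inequality gives
\begin{align*}
\Pr\left[X> n\,\epsilon^{\ln\ln(1/\epsilon)}\right]\le \frac{\mathbb{E}[X]}{n\,\epsilon^{\ln\ln(1/\epsilon)}}\le \epsilon^{(4-o(1))\ln\ln(1/\epsilon)},
\end{align*}
which is a \emph{constant} depending only on $\epsilon$ --- it does not tend to $0$ as $n\to\infty$. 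The paper's footnote defines \textbf{whp} as probability tending to $1$ as $n\to\infty$ with $\epsilon$ a fixed constant, and the lemma is later combined by union bound with other \textbf{whp} events, so a failure probability that is merely small in $\epsilon$ is not enough. Writing ``$=o(1)$'' after the Markov bound conflates the limit $\epsilon\to 0$ with the limit $n\to\infty$.

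To close the gap you need concentration of $X$, not just its mean. A naive edge-exposure Azuma bound does not obviously work here, since there are $nd/2$ exposure steps and $d$ may be as large as $\Theta(n)$, making the martingale deviation $\sqrt{nd}\cdot O(\ln(1/\epsilon))$ potentially larger than $n\,\epsilon^{\ln\ln(1/\epsilon)}$. The paper instead avoids the expectation altogether: it argues that the event $X>k$ with $k=n\,\epsilon^{\ln\ln(1/\epsilon)}$ forces a witness configuration --- a set $S$ of at most $\frac{k}{2\ln(1/\epsilon)}$ vertices together with $k$ edges of $G$ touching $S$ that all appear in $G_p$ --- and union-bounds over all such configurations, obtaining a failure probability of the form $\left(\frac{e}{\ln(1/\epsilon)}\right)^{k}$, which is exponentially small in $n$ because $k$ is linear in $n$. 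Your tail estimate for a single vertex is the right ingredient, but you need to package it into such a counting (or another genuinely $n$-dependent concentration) argument rather than apply Markov to the sum.
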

\begin{proof}
Let $k=n\epsilon^{\ln\ln\left(\frac{1}{\epsilon}\right)}$. Assume that $X>k$. Then, there exists a subset of $V$, denote it by $S$, formed by vertices of degree at least $5\ln\left(\frac{1}{\epsilon}\right)$ such that 
\begin{align*}
    e_{G_p}(S)+e_{G_p}(S, S^C)\ge k,
\end{align*}
and 
\begin{align*}
    |S|\le \frac{2k}{5\ln\left(\frac{1}{\epsilon}\right)}+1\le\frac{k}{2\ln\left(\frac{1}{\epsilon}\right)}.
\end{align*}
We have at most $\binom{n}{|S|}\le \binom{n}{k/2\ln\left(\frac{1}{\epsilon}\right)}$ ways to choose a set of size $|S|$. We then have at most $\binom{|S|d}{k}\le \binom{kd/2\ln\left(\frac{1}{\epsilon}\right)}{k}$ ways to choose $k$ of the edges touching $S$, and include them in $G_p$ with probability $p^k$. Therefore, the probability that $X>k$ is at most:
\begin{align*}
    \binom{n}{\frac{k}{2\ln\left(\frac{1}{\epsilon}\right)}}\binom{\frac{kd}{2\ln\left(\frac{1}{\epsilon}\right)}}{k}p^k&
    \le \left(\left(\frac{2en\ln\left(\frac{1}{\epsilon}\right)}{n\epsilon^{\ln\ln\left(\frac{1}{\epsilon}\right)}}\right)^{\frac{1}{2\ln\left(\frac{1}{\epsilon}\right)}}\frac{ekd}{2k\ln\left(\frac{1}{\epsilon}\right)}\cdot\frac{2}{d}\right)^k\\
    & \le
    \left(\left(\frac{6\ln\left(\frac{1}{\epsilon}\right)}{\epsilon^{\ln\ln\left(\frac{1}{\epsilon}\right)}}\right)^{\frac{1}{2\ln\left(\frac{1}{\epsilon}\right)}}\frac{e}{\ln\left(\frac{1}{\epsilon}\right)}\right)^k\\
    &= \left(\frac{\left(6\ln\left(\frac{1}{\epsilon}\right)\right)^{\frac{1}{2\ln\left(\frac{1}{\epsilon}\right)}}}{\ln^{1/2}\left(\frac{1}{\epsilon}\right)}\cdot \frac{e}{\ln\left(\frac{1}{\epsilon}\right)}\right)^k\\
    &\le \left(\frac{e}{\ln\left(\frac{1}{\epsilon}\right)}\right)^k=o(1).
\end{align*}
\end{proof}

The next lemma bounds the typical number of edges incident to connected subsets:
\begin{lemma} \label{incident-edges}
\textit{Let $G=(V,E)$ be a $d$-regular graph. Let $p\le\frac{2}{d}$. Form $G_p$ by retaining each edge of $G$ with probability $p$. Then, \textbf{whp} for all $S\subseteq V(G)$ such that $|S|=k\ge \ln n$ and $G_p[S]$ is connected,
$$e_{G_p}(S)+e_{G_p}(S,S^C)<10k.$$
}
\end{lemma}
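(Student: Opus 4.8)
The statement to prove is Lemma \ref{incident-edges}: whp, for every $S \subseteq V(G)$ with $|S| = k \ge \ln n$ and $G_p[S]$ connected, we have $e_{G_p}(S) + e_{G_p}(S, S^C) < 10k$. The natural approach is a first-moment / union bound over connected sets. The key point is that if $G_p[S]$ is connected, then $S$ contains a spanning tree $T$ on $k$ vertices, and $T \subseteq G_p$; this lets us use Lemma \ref{trees} to bound the number of candidate pairs $(S, T)$ by $n \frac{k^{k-2} d^{k-1}}{k!}$, and each fixed tree sits in $G_p$ with probability exactly $p^{k-1}$.

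**Key steps.** First I would fix $k \ge \ln n$ and suppose, for contradiction, that some connected $S$ of size $k$ has at least $10k$ edges of $G$ incident to it present in $G_p$ (i.e. $e_{G_p}(S) + e_{G_p}(S,S^C) \ge 10k$). Since $G$ is $d$-regular, the total number of edges of $G$ incident to $S$ is at most $kd$, so there are at most $\binom{kd}{10k}$ ways to choose which $10k$ of them land in $G_p$. Combining: the expected number of such "bad" configurations — a $k$-vertex set $S$, a spanning tree $T$ of $G[S]$ witnessing connectivity in $G_p$, and a set of $10k$ incident edges in $G_p$ — is at most
\begin{align*}
\sum_{k \ge \ln n} n\,\frac{k^{k-2} d^{k-1}}{k!}\binom{kd}{10k} p^{k-1} p^{10k}.
\end{align*}
Now I would plug in $p \le \frac{2}{d}$, use $k! \ge (k/e)^k$ and $\binom{kd}{10k} \le \left(\frac{ekd}{10k}\right)^{10k} = \left(\frac{ed}{10}\right)^{10k}$, and simplify. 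The powers of $d$ should cancel against the powers of $p$: we get roughly $d^{k-1} \cdot d^{10k} \cdot d^{-(k-1)} \cdot d^{-10k} = 1$ up to the constant factors $2^{11k-1}$. Collecting the $k$-dependent base, the summand becomes at most $n \cdot k^{k-2} (e/k)^k \cdot (\text{const})^k / k^{k-1}$-type expression; tracking carefully, the dominant factor is something like $n \cdot C^k / k^k$ for an absolute constant $C$ (here the $k^{k-2}$ from the tree count, the $e^k/k^k$ from $k!$, and $k^{-k}$ pieces combine), which for $k \ge \ln n$ is $o(1/n^{\Omega(1)})$ per term, and summing the geometric-type series over $k$ from $\ln n$ to $n$ still gives $o(1)$. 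I would then conclude by a union bound that whp no such $S$ exists.

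**Main obstacle.** The only delicate point is the bookkeeping in the exponential estimate: one must check that the constant in the base of $C^k$ is genuinely absorbed by the $k^{-k}$-type decay coming from $1/k!$, so that each term is at most, say, $n \cdot 2^{-k}$ (using $k \ge \ln n$ to get $n \cdot 2^{-\ln n} = o(1)$ after summing), and that the sum over the range $\ln n \le k \le n$ does not blow up — a crude bound like multiplying by $n$ terms is harmless since each term is already $n^{-\omega(1)}$. A secondary subtlety is making sure the choice of the spanning tree is legitimate: connectivity of $G_p[S]$ guarantees a spanning tree inside $G_p[S]$, hence inside $G_p$, and we count ordered data (set, tree, edge-subset) so Lemma \ref{trees} applies directly — there is no double-counting issue since we are upper-bounding. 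I expect the calculation to go through comfortably with room to spare, which is consistent with the generous constant $10$ in the statement.
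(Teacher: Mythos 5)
Your approach is the same as the paper's: a first-moment union bound over spanning trees of $S$ (counted via Lemma \ref{trees}) together with a chosen subset of incident edges, using $p\le \frac{2}{d}$ so that the powers of $d$ cancel. One step as written is not quite right, though: you choose $10k$ of the incident edges and multiply the probabilities $p^{k-1}$ (for the tree) and $p^{10k}$ (for the chosen edges), but the chosen edges may include tree edges, in which case the probability of the joint event is $p^{|E(T)\cup F|}>p^{(k-1)+10k}$; so $p^{k-1}p^{10k}$ is \emph{not} an upper bound on that probability, and the displayed sum does not bound the expected number of bad configurations. The paper avoids this by selecting only $9k$ \emph{additional} incident edges disjoint from the tree (legitimate, since $10k$ present incident edges minus the $k-1$ tree edges leaves more than $9k$ others), after which $p^{k-1}\cdot p^{9k}$ is genuine independence. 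With that one-line fix your computation goes through: the summand is at most $n(2e)^{k-1}\left(\frac{2e}{9}\right)^{9k}\le n e^{-2k}=o(1/n)$ for $k\ge\ln n$, and the union bound over $k$ finishes. A secondary correction to your "main obstacle" discussion: the decay does not come from the $k^{-k}$ in $1/k!$ absorbing a constant --- that factor is cancelled almost exactly by the $k^{k-2}$ in the tree count --- but from the numerical fact that $(2e)\left(\frac{2e}{9}\right)^{9}<e^{-2}$.
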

\begin{proof}
Since any connected set in $G_p$ has a spanning tree, it is sufficient to show the statement is true for the edges incident to trees of order $k$ in $G_p$. By Lemma \ref{trees}, there are at most $n(ed)^{k-1}$ trees on $k$ vertices in $G$. The probability such a tree falls into $G_p$ is $p^{k-1}$. A set of $k$ vertices has at most $kd$ edges touching it in $G$. We have at most $\binom{kd}{9k}$ ways to choose the additional $9k$ edges incident to the set, and they are in $G_p$ with probability $p^{9k}$. As such, by the union bound, the probability of an event violating the statement of the lemma is at most:
\begin{align*}
    n(ed)^{k-1}p^{k-1}\binom{kd}{9k}p^{9k}&\le n\cdot \left(2e\right)^{k-1}\left(\frac{2e}{9}\right)^{9k}\\
    &\le n\exp(-2k)=o(1/n),
\end{align*}
for $k\ge \ln n$. The union bound over the $<n$ possible values of $k$ completes the proof.
\end{proof}

The final lemma of this section will be a key tool in proving Theorem \ref{edge-expansion}. In order to prove it, we will make use of the Breadth First Search (BFS) algorithm. 

This variation of the BFS constructs $G_p$ while exploring the spanning trees of the connected components. The algorithm maintains three sets of vertices: $S$, the vertices whose exploration is complete; $Q$, the vertices currently being explored, kept in a queue; and $T$, the vertices that have not been explored yet. The algorithm starts with $S=Q=\varnothing$ and $T=V(G)$, and ends when $Q\cup T=\varnothing$. At each step, if $Q$ is non-empty, the algorithm queries $T$ for neighbours of the first vertex in $Q$, according to the edges of $G$ and an order $\sigma$ on $V$. The algorithm is fed $X_i$, $0\le i\le \frac{nd}{2}$, i.i.d Bernoulli$(p)$ random variables, each corresponding to a positive (with probability $p$) or negative (with probability $1-p$) answer to such a query. Thus, each $X_i$ corresponds to some edge of $G$ (noting that this edge depends on the execution of the algorithm), which is included in $G_p$ if $X_i=1$ and is excluded otherwise. If $Q$ is non-empty and we discovered a neighbour of $v\in Q$ in $T$, we place the neighbour as the last vertex in $Q$ and continue. If $Q$ is non-empty and the first vertex in $Q$ has no more queries to ask, then we move the first vertex of $Q$ to S. If $Q=\varnothing$, we move the next vertex from $T$ (according to $\sigma$) into $Q.$  Observe that the BFS uncovers a spanning forest of $G_p$. Formally, in order to obtain a graph distributed according to $G_p$, after the run of the BFS we query all edges in $G$ left unqueried and include them in $G_p$ according to the remaining random bits $X_i$. However, this final step in the run of the algorithm will not be of any importance to us, and henceforth when referring to the BFS we consider only the part of the run where we uncover a spanning forest of $G_p$.

We will make use of the following properties of the BFS:
\begin{enumerate}
        \item A vertex moves from $Q$ to $S$ only if it has no more neighbours in $T$. Thus, at any moment $t$ all the edges between $S$ and $T$ have been queried;
        \item While $Q$ is non-empty, we are exploring the same connected component; 
        \item At any moment $t$, we have that $|S\cup Q|\ge 1+\sum_{i=1}^tX_i$, since the first vertex moves into $Q$ without a query, and every positive answer to a query corresponds to a vertex that moves from $T$ into $Q$ (and perhaps later on into $S$); and,
        \item At any moment $t$, we have asked no queries between the vertices of $Q$ and $T$, except perhaps for the first vertex of $Q$.
\end{enumerate}

We note that the initial analysis of the BFS will be similar to that of the Depth First Search (DFS) in \cite{k and s}.

\begin{lemma} \label{BFS}
\textit{Let $\epsilon>0$ be a small enough constant and let $\delta\le \epsilon^4$. Let $G=(V,E)$ be an $(n,d,\lambda)$-graph, with $\frac{\lambda}{d}\le \delta$. Let $p=\frac{1+\epsilon}{d}$. Form $G_p$ by retaining each edge of $G$ independently with probability $p$. Then, the probability there is a connected component $B$, $|B|=k$, in $G_p$ with $\frac{\epsilon^2n}{16}\le k\le \frac{11 \epsilon n}{10}$ is at most $\exp\left(-\frac{\epsilon^2k}{203^2}\right).$}
\end{lemma}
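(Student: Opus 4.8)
The plan is to run the BFS process described above and track the "excess" quantity $Y_t := \sum_{i=1}^t X_i - (\text{number of vertices moved into } Q \text{ so far})$, or equivalently to control the position of the queue. Suppose there is a connected component $B$ with $|B| = k$ in the prescribed range. Consider the moment $t_1$ at which the BFS begins exploring $B$ (that is, the moment the first vertex of $B$ is moved from $T$ into $Q$) and the moment $t_2$ at which it finishes (the queue empties again). During the time interval $[t_1, t_2]$ the BFS uncovers a spanning tree of $B$ on $k$ vertices, so exactly $k-1$ of the queries in this interval receive a positive answer; moreover, by property (1), all edges between $S$ and $T$ are queried, and since by the end of the interval $S$ has grown by $k$ vertices while $T$ has lost $k$ vertices, the number of queries asked in $[t_1,t_2]$ is at least $e_G(B, B^C)$ minus a negligible correction — and by the expander mixing lemma (Lemma \ref{eml}), $e_G(B,B^C) = \frac{d}{n}k(n-k)(1 \pm o(1)) \ge (1-\epsilon^2/10)dk$ say, using $k \le \frac{11\epsilon n}{10}$ and $\lambda/d \le \delta \le \epsilon^4$. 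So in a window of $m \ge (1 - \epsilon^2/10)dk$ consecutive steps, the partial sum of i.i.d.\ Bernoulli$(p)$ variables, $p = \frac{1+\epsilon}{d}$, equals exactly $k-1$.

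The contradiction then comes from a Chernoff / large-deviation estimate: the expected number of successes in $m$ steps is $mp \ge (1 - \epsilon^2/10)(1+\epsilon)k \ge (1 + \epsilon/2)k$ (for small $\epsilon$), which is strictly larger than $k - 1$ by a multiplicative factor bounded away from $1$ by $\Omega(\epsilon)$. Hence the event that the window contains only $k-1$ successes has probability at most $\exp(-c\epsilon^2 k)$ for an appropriate constant, by the standard lower-tail Chernoff bound $\Pr[\mathrm{Bin}(m,p) \le (1-\eta)mp] \le \exp(-\eta^2 mp/2)$ with $\eta = \Theta(\epsilon)$ and $mp = \Theta(k)$. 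I would be careful to extract the constant $203$: one wants $\eta \approx \epsilon/2$ and $mp \ge k$, giving exponent roughly $\epsilon^2 k / 8$, and the looseness to $203^2$ comes from the crude handling of the lower endpoint $k \ge \frac{\epsilon^2 n}{16}$ (needed so that the additive $\pm 1$ and the $o(1)$ error terms in the mixing lemma are absorbed) and from not wanting to optimize. One subtlety: the window $[t_1,t_2]$ is not a fixed deterministic interval — its endpoints are stopping times determined by the process. The clean way around this is to union-bound over all pairs $(t_1, m)$ with $t_1 \in \{0, 1, \dots, nd/2\}$ and $m \ge (1-\epsilon^2/10)dk$, or better, to observe that the relevant bad event is "some interval of length $\ge (1-\epsilon^2/10)dk$ has at most $k-1$ successes, where the interval's length is at least that threshold"; since we only need an upper bound, we may fix the window to have length exactly $\lceil (1-\epsilon^2/10)dk \rceil$ starting at each possible $t_1$, and the number of choices of $t_1$ is at most $nd/2 \le \exp(o(\epsilon^2 k))$ (using $k \ge \frac{\epsilon^2 n}{16}$, so $n \le 16 k/\epsilon^2$ and $d$ polynomial in... — actually $d$ could be as large as $n$, so $nd/2 \le n^2/2 \le (16k/\epsilon^2)^2$, which is polynomial in $k$ and hence $\exp(o(\epsilon^2 k))$), and this polynomial factor is swallowed by the exponential savings.

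The main obstacle, then, is bookkeeping rather than conceptual: getting the constants to line up so the final bound reads exactly $\exp(-\epsilon^2 k / 203^2)$, while correctly justifying that (a) the number of queries in the exploration window is genuinely at least $\approx e_G(B,B^C)$ — this uses property (1) of the BFS together with the fact that during the exploration of $B$ the only edges incident to $B$ that could fail to be queried are at most $O(1)$ per the queue's first-vertex exception in property (4), negligible against $dk$ — and (b) the union bound over $t_1$ and over the $< n$ values of $k$ both contribute only $\exp(o(\epsilon^2 k))$. I would organize the write-up as: (i) set up the BFS and the window; (ii) lower-bound the window length via Lemma \ref{eml}; (iii) apply the Chernoff lower-tail bound to the fixed-length window; (iv) union bound over $t_1$ (and implicitly $k$) and simplify the constants.
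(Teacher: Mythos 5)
The central step of your argument --- that the exploration window $[t_1,t_2]$ of $B$ contains at least $e_G(B,B^C)(1-o(1))\ge (1-\epsilon^2/10)dk$ queries --- is false in general, and this is a genuine gap rather than bookkeeping. The queries made while exploring $B$ are only those between vertices of $B$ and the set $T$ of still-unexplored vertices; every edge from $B$ to the set $S_{\mathrm{before}}$ of vertices whose exploration finished before time $t_1$ was already queried (negatively) earlier in the process and does not reappear in $[t_1,t_2]$. So the window length is only $e_G(B,B^C)-e_G(B,S_{\mathrm{before}})+k-1$, and the ``first-vertex exception'' of property (4) is not the relevant correction. In the worst case $B$ is discovered after the giant component $M$, $|M|\approx 2\epsilon n$, in which case $e_G(B,S_{\mathrm{before}})\ge e_G(B,M)\approx 2\epsilon dk$ by the expander mixing lemma, the window has length at most roughly $(1-2\epsilon)dk$, and the expected number of successes in it is about $(1-\epsilon)k<k-1$. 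Then ``only $k-1$ successes'' is an \emph{upper}-tail event, your lower-tail Chernoff bound is inapplicable, and the padding trick (extending the window to a fixed length $\lceil(1-\epsilon^2/10)dk\rceil$) fails because the padded window runs into the exploration of later components and picks up their positive answers.

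The paper's proof is built around exactly this difficulty. It first proves Claim \ref{large-Q}: with probability $1-\exp\left(-\Theta(\epsilon^3 n)\right)$ the BFS uncovers a component of size greater than $\frac{11\epsilon n}{10}$ in an interval starting no later than time $\frac{\epsilon^2 nd}{20}$. It then splits into three cases according to when $B$ is explored: a component finished before time $\frac{\epsilon^2 nd}{20}$ is too small (bounding $|B|\le 1+\sum_{i\le \epsilon^2 nd/20}X_i$ by Chernoff); the component of Claim \ref{large-Q} itself is too large; and a component started after it has, by the mixing lemma applied to $B$ and $M$, at least $\frac{12\epsilon}{11}|B|d$ of its boundary edges already queried, so its window has length at most $\left(1-\frac{12\epsilon}{11}\right)|B|d$ and an \emph{upper}-tail Chernoff bound (together with a union bound over the $n$ possible starting points, as you propose) gives $\exp\left(-\Theta(\epsilon^2 k)\right)$. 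Your proposal contains no analogue of Claim \ref{large-Q}, which is the bulk of the paper's proof and the ingredient that controls $e_G(B,S_{\mathrm{before}})$ in the late-discovery case; without it, the window-length lower bound at the heart of your argument cannot be justified.
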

\begin{proof}
We require the following claim, whose proof we defer to the end of this proof:
\begin{claim} \label{large-Q}
\textit{With probability at least $1-\exp\left(-\frac{\epsilon^3n}{201^2}\right)$, there is an interval starting at least as early as $\frac{\epsilon^2nd}{20}$, where the BFS uncovers a connected component of size larger than $\frac{11\epsilon n}{10}$.}
\end{claim}

We now consider the three possible cases: we finished discovering $B$ prior to $\frac{\epsilon^2nd}{20}$; $B$ is the connected component asserted by Claim \ref{large-Q}; we began discovering $B$ subsequent to discovery of the connected component described in Claim \ref{large-Q}. 

If we finished discovering $B$ prior to $\frac{\epsilon^2nd}{20}$, then by properties of the BFS we have that $|B|\le 1+\sum_{i=1}^{\frac{\epsilon^2nd}{20}}X_i$. As such, by a typical Chernoff-type bound (see Appendix A in \cite{probablistic method}), we obtain that with probability at least \begin{align*}
    1-\exp\left(-\frac{\frac{\epsilon^4n^2}{16^2}}{\frac{3\cdot\epsilon^2n}{20}}\right)\ge 1-\exp\left(-\frac{\epsilon^2n}{40}\right),    
\end{align*}
we have that $|B|< \frac{\epsilon^2n}{15}.$

If $B$ is the connected component described in Claim \ref{large-Q}, then with probability at least $1-\exp\left(-\frac{\epsilon^3n}{201^2}\right)$, we have that $|B|>\frac{11\epsilon n}{10}$.

Suppose towards contradiction that we discovered some connected component $B$ of size at least $\frac{\epsilon^2n}{15}$, and that we began discovering $B$ subsequent to the discovery of the connected component described in Claim \ref{large-Q}. By Claim \ref{large-Q}, with probability at least $1-\exp\left(-\frac{\epsilon^3n}{202^2}\right)$ we have discovered a component of size at least $\frac{11\epsilon n}{10}$, which we denote by $M$. Note that by properties of the BFS, by the time we have finished discovering $M$ all the edges of $G$ between $M$ and $T$ have been queried (with all queries answered in the negative). Therefore, we could have no queries between $B$ and $M$, and hence we have an interval in the BFS whose length is at most $|B|d-e_G(B,M)$, where we received at least $|B|-1$ positive answers. By the above bound on $|M|$ and by the expander mixing lemma (Lemma \ref{eml}), with probability at least $1-\exp\left(-\frac{\epsilon^3n}{202^2}\right)$ we have that:
\begin{align*}
    e_G(B,M)&\ge \frac{d}{n}|B||M|-\lambda\sqrt{|B||M|}\\
    &\ge \frac{11\epsilon |B|d}{10}-\lambda\sqrt{\frac{11\epsilon n|B|}{10}}\\
    &\ge \frac{12\epsilon |B|d}{11},
\end{align*}
where the last inequality is due to $|B|\ge \frac{\epsilon^2n}{15}$ and $\frac{\lambda}{d}\le \delta \le \epsilon^4$. Therefore, with probability at least $1-\exp\left(-\frac{\epsilon^3n}{202^2}\right)$, we have an interval whose length is at most $$|B|d-e_G(B,M)\le \left(1-\frac{12\epsilon}{11}\right)|B|d,$$
where we received at least $|B|-1$ positive answers. Considering the $n$ possible starts of the interval together with a Chernoff-type bound, this happens with probability at most:
\begin{align*}
    n\cdot P\left[Bin\left(\left(1-\frac{12\epsilon}{11}\right)|B|d, p\right)\ge |B|-1\right]\le \exp\left(-\frac{\epsilon^2|B|}{50}\right),
\end{align*}
and thus the probability of an event violating the statement of the Lemma is at most $$\exp\left(-\frac{\epsilon^2|B|}{50}\right)+\exp\left(-\frac{\epsilon^3n}{202^2}\right)\le \exp\left(-\frac{\epsilon^2|B|}{203^2}\right),$$
completing the proof.

\begin{proof}[Proof of Claim \ref{large-Q}]
We begin by showing that with probability at least $1-\exp\left(-\frac{\epsilon^3n}{41^2}\right)$, we have that $Q$ does not empty in the interval
\begin{align*}
    I=\left[\frac{\epsilon^2nd}{20}, \frac{3\epsilon nd}{4}\right].
\end{align*}
Assume $Q$ empties at some moment $t\in I.$ Specifically, we let $t$ be the first moment in that interval such that $Q$ is empty. 

Note, we may assume that at $t$ we have $|S|\le \frac{n}{3}$. Otherwise, as the vertices move between the sets one by one, there is some moment $t'\le t\le \frac{3\epsilon nd}{4}$ where $|S|=\frac{n}{3}$. By the expander mixing lemma, we then had by the moment $t'$ at least
\begin{align*}
    e_G(S,S^C)&\ge \frac{d}{n}\frac{2n}{3}\frac{n}{3}-\lambda\sqrt{\frac{2n^2}{9}}\\
    &\ge \frac{nd}{9}>\frac{3\epsilon nd}{4}
\end{align*}
queries, a contradiction.

Furthermore, at $t$, we have by properties of the BFS that $|S|\ge \sum_{i=1}^t X_i$. By a Chernoff-type bound, we obtain that with probability at least $1-\exp\left(-\frac{\epsilon t}{40^2d}\right)\ge 1-\exp\left(-\frac{\epsilon^3n}{41^2}\right)$, we have $|S|\ge \frac{(1+\epsilon)t}{d}-\frac{\epsilon t}{39d}.$ As such, by the expander mixing lemma, with the same probability
\begin{align*}
    e_G(S,S^C)&\ge \frac{d}{n}(n-|S|)|S|-\lambda\sqrt{(n-|S|)|S|}\\
    &\ge \left(1-2\delta-\frac{\left(1+\epsilon-\frac{\epsilon}{39}\right)t}{dn}\right)\left(1+\epsilon-\frac{\epsilon}{39}\right)t,
\end{align*}
where the second inequality uses the fact that since we assumed that $|S|\le \frac{n}{3}$, the expression is minimised by the smallest possible value of $|S|$. We note that this accounts for the queries between $S$ and $T$ by $t$. 
For any $t\le \frac{3\epsilon nd}{4}$, we then have that with the same probability:
\begin{align*}
    t&\ge \left(1-2\delta-\frac{\left(1+\frac{38\epsilon}{39}\right)t}{dn}\right)\left(1+\frac{38\epsilon}{39}\right)t\\
    &\ge \left(1-\frac{4\epsilon}{5}\right)\left(1+\frac{38\epsilon}{39}\right)t>t,
\end{align*}
a contradiction.

We now continue the analysis of the algorithm, albeit in a slightly different manner. By the properties of the BFS, at the moment $t=\frac{3\epsilon nd}{4}$, there have been no queries between the vertices of $Q$ and $T$ (except perhaps for the first vertex $Q$).
We set $Q_0$, $T_0$ and $S_0$ to be the sets $Q$ without its first vertex, $T$ and $S$, respectively, at the moment $\frac{3\epsilon nd}{4}$. For any $i\ge 1$, as long as $Q_{i-1}$ is nonempty, we define $Q_i$ to be the neighbourhood of $Q_{i-1}$ in $T$, $S_i=S_{i-1}\cup Q_{i-1}$ and $T_i=T_{i-1}\setminus Q_{i}$. Furthermore, we call each such step an \textit{iteration}, and $Q_i, S_i$ and $T_i$ are the relevant sets at iteration $i$.
We claim that, with probability at least $1-\exp\left(-\frac{\epsilon^3n}{50^2}\right)$, there is an iteration where we uncover a connected component whose size is larger than $\frac{11\epsilon n}{10}$, completing the proof. 

We begin by considering the set $Q_0$. Assume that $|Q_0|\le \frac{\epsilon^2n}{9}$. Then, as before, by the properties of the BFS and a Chernoff-type bound, we have with probability at least $1-\exp\left(-\frac{\epsilon^3n}{41^2}\right)$ that: 
\begin{align}
|S_0|\ge \frac{3(1+\epsilon)\epsilon n}{4}-\frac{3\epsilon^2n}{130}-\frac{\epsilon^2n}{9}\ge \frac{3\epsilon n}{4}+\frac{3\epsilon^2n}{5}.
\end{align}
By the same arguments as before, we may further assume that $|S_0|\le \frac{n}{3}$. Therefore (with the same probability), by the expander mixing lemma
\begin{align*}
    e_G(S_0,T_0)&\ge \frac{d}{n}|S_0||T_0|-\lambda\sqrt{|S_0||T_0|}\\
            &\ge (1-5\delta)\left(\frac{3\epsilon}{4}+\frac{3\epsilon^2}{5}\right)\left(1-\frac{3\epsilon}{4}-\epsilon^2\right)nd\\
            &\ge \frac{3\epsilon nd}{4}+\frac{\epsilon^2 nd}{80}>\frac{3\epsilon nd}{4},
\end{align*}
where in the second inequality we used the fact that since we assume that $|S_0|\le \frac{n}{3}$, the expression is minimised by the smallest possible value of $|S_0|$. Thus, with probability at least $1-\exp\left(-\frac{\epsilon^3n}{41^2}\right)$, we have that $|Q_0|\ge\frac{\epsilon^2n}{9}$.

We also require the following claim:
\begin{claim} \label{eml-cor}
\textit{Let $\alpha, \beta>0$ and $A\subseteq V$ be such that $|A|\ge \beta n$. Define:
\begin{align*}
    B=\left\{v\in V\setminus A:d_G(v,A)\le (1-\alpha)\frac{|A|d}{n}\right\}.
\end{align*}
Then $|B|\le \frac{\delta^2n}{\alpha^2\beta}$.
}
\end{claim}
\begin{proof}
By the definition of $B$, we have that $e_G(A,B)\le (1-\alpha)\frac{|A||B|d}{n}$. On the other hand, by the expander mixing lemma, we have that:
\begin{align*}
    e_G(A,B)\ge \frac{|A||B|d}{n}-\lambda\sqrt{|A||B|}.
\end{align*}
Combining these together, we obtain:
\begin{align*}
    \frac{|A||B|d}{n}-\lambda\sqrt{|A||B|}\le (1-\alpha)\frac{|A||B|d}{n}.
\end{align*}
Since $|A|\ge \beta n$, we thus have that $|B|\le \frac{\delta^2n}{\alpha^2\beta}$, as required.
\end{proof}
Now, using Claim \ref{eml-cor} with $A=Q_i$ and $\alpha=\epsilon^2$, we have that the number of vertices which have less than $\frac{(1-\epsilon^2)|Q_i|d}{n}$ neighbours in $Q_i$ is at most $\frac{n}{|Q_i|}\cdot\frac{\delta^2n}{\epsilon^4}\le \frac{\epsilon^4n^2}{|Q_i|}$. Denote the set of these vertices by $B_i$. As such, every vertex in $T_i\setminus B_i$ has probability at least $1-(1-p)^\frac{(1-\epsilon^2)|Q_i|d}{n}\ge \frac{(1+\epsilon-2\epsilon^2)|Q_i|}{n}$ to be in $Q_{i+1}$, and these events are independent for each vertex. Hence, at iteration $i+1$, the number of vertices in $Q_{i+1}$ stochastically dominates 
\begin{align*}
    Bin\left(|T_i|-|B_i|, \frac{(1+\epsilon-2\epsilon^2)|Q_i|}{n}\right).
\end{align*}
Since we prefer to express the above in terms of $Q_i$, observe that:
\begin{align*}
    |T_i|-|B_i|\ge n-|S_0|-\sum_{j=1}^i|Q_j|-\frac{\epsilon^4n^2}{|Q_i|}.
\end{align*}
Recall that with probability at least $1-\exp\left(-\frac{\epsilon^3n}{41^2}\right)$, we have that $Q$ does not empty in the interval $I$. With probability at least $1-\exp\left(-\frac{\epsilon^3n}{41^2}\right)$, we have that $\sum_{j=1}^{\frac{3\epsilon nd}{4}}X_j\le \frac{31\epsilon n}{40}$. Furthermore, consider the sets $S$ and $T$ at the last moment before $\frac{\epsilon^2nd}{20}+1$ where $Q$ was empty. If we had that $|S|\ge \epsilon^2n$, then by the expander mixing lemma at that moment
\begin{align*}
    e_G(S, T)\ge \epsilon^2nd-2\lambda\epsilon n>\frac{\epsilon^2nd}{20},
\end{align*}
a contradiction. By properties of the BFS, we may thus conclude that with probability at least $1-\exp\left(-\frac{\epsilon^3n}{42^2}\right)$, we have that $|S_0|\le \frac{31\epsilon n}{40}+\epsilon^2n$. Finally, for the simplicity of computation, we stop the process before $|Q_i|\le \frac{7\epsilon^2n}{90}$, and hence we have that $\frac{\epsilon^4n^2}{|Q_i|}\le 15\epsilon^2n$. Therefore, with probability at least $1-\exp\left(-\frac{\epsilon^3n}{45^2}\right)$, we have that:
\begin{align*}
    |T_i|-|B_i|&\ge n-|S_0|-\sum_{j=1}^i|Q_j|-\frac{\epsilon^4n^2}{|Q_i|}\\
    &\ge \left(1-\frac{31\epsilon}{40}-20\epsilon^2-\frac{\sum_{j=1}^i|Q_j|}{n}\right)n.
\end{align*}

Let
\begin{align*}
    N_i=\bigcup_{j=1}^{i}Q_j, \ q_i=\frac{|Q_i|}{n}, \text{ and } n_i=\frac{|N_i|}{n}.
\end{align*}
By a typical Chernoff-type bound, we have with probability at least
\begin{align*}
    1-\exp\left(-\frac{\epsilon|Q_i|}{46^2}\right)\ge 1-\exp\left(-\frac{\epsilon^3n}{200^2}\right),
\end{align*}
that
\begin{align}
    q_{i+1}\ge \left(1+\frac{44\epsilon}{45}\right)\left(1-\frac{31\epsilon}{40}-\sum_{j=1}^iq_j-20\epsilon^2\right)q_i\ge \left(1+\frac{\epsilon}{5}-(1+\epsilon)\sum_{j=1}^iq_j\right)q_i.
\end{align}

We continue by assuming that $(3)$ holds. Denote by $i_0$ the first iteration where $n_{i_0}\ge\frac{\frac{\epsilon}{5}}{1+\epsilon}$, while noting that for any $i<i_0$, we have by $(3)$ that $q_{i+1}\ge q_i\ge\frac{\epsilon^2}{9}$. Let $i_1$ be the last iteration before $q_i<\frac{7\epsilon^2}{90}$. Assume that $n_{i_1}<\frac{7\epsilon}{20}$. Then, for any $i_0<i< i_1$, we have by $(3)$ that
\begin{align*}
    q_{i+1}&\ge \left(1+\frac{\epsilon}{5}-(1+\epsilon)\frac{7\epsilon}{20}\right)q_i\\
    &>\left(1-\frac{\epsilon}{5}\right)q_i.
\end{align*}
Thus,
\begin{align*}
    \sum_{i=i_0+1}^{i_1}q_i&\ge\frac{\frac{\epsilon^2}{9}-q_{i_1}}{\frac{\epsilon}{5}}\\
    &\ge \frac{5\epsilon}{9}-\frac{7\epsilon}{18(1-\frac{\epsilon}{5})}\\ &>\frac{99\epsilon}{600}.
\end{align*}
However, we then have that
\begin{align*}
    n_{i_1}\ge \frac{\frac{\epsilon}{5}}{1+\epsilon}+\frac{99\epsilon}{600}>\frac{7\epsilon}{20}.
\end{align*}
Hence, we may conclude that with probability at least $1-\exp\left(-\frac{\epsilon^3n}{200^2}\right)$, $|N_{i_1}|=n\cdot n_{i_1}\ge \frac{7\epsilon n}{20}$, and thus with probability at least $1-\exp\left(-\frac{\epsilon^3n}{201^2}\right)$, we have discovered a component of size at least
\begin{align*}
    \frac{7\epsilon n}{20}+\frac{3\epsilon n}{4}=\frac{11\epsilon n}{10},
\end{align*}
where we used both the above bound on $|N_{i_1}|$ and our lower bound on $|S_0|$ as given in $(2)$.
\end{proof}
\end{proof}

\section{Expansion and Expanders}
In this section, we will prove Theorems \ref{vertex-expansion}, \ref{edge-expansion}, and \ref{expanding-subgraph}. The proofs of Theorems \ref{vertex-expansion} and \ref{edge-expansion} are somewhat interwoven.  

We begin with the proof of the first property of Theorem \ref{vertex-expansion}, which also implies the same asymptotic edge-expansion factor for connected subsets. Utilising Lemma \ref{BFS} we then show that, in fact, the same edge-expansion factor holds \textbf{whp} for connected subsets of size up to $\frac{12\epsilon n}{11}$ (Lemma \ref{edge-expansion-connected-sets}). Having the asymptotic bound on the edge-expansion of connected sets of any size between $\frac{16\ln n}{\epsilon^2}$ and $\frac{12\epsilon n}{11}$, we will be able to derive Theorem \ref{edge-expansion}. This, together with an asymptotic bound on the sum of degrees of high-degree vertices (Lemma \ref{high-degree}), will imply the second property of Theorem \ref{vertex-expansion}. We conclude the section with the proof of Theorem \ref{expanding-subgraph}.

For the proof of the first property of Theorem \ref{vertex-expansion}, we will directly bound the probability of the event violating the statement of the theorem.
\begin{proof} [Proof of Property 1 of Theorem \ref{vertex-expansion}]
Consider the event $\mathcal{A}_k$,
\begin{align*}
    \mathcal{A}_k=\left\{\exists S\subseteq V(G), |S|=k, G_p[S]\text{ is connected} \ \& \  \big|N_{G_p}(S)\big|<\frac{\epsilon^2k}{40\ln\left(\frac{1}{\epsilon}\right)}\right\}.
\end{align*}
We will show that for $\frac{16\ln n}{\epsilon^2}\le k\le \frac{\epsilon^2n}{50}$, $P\left[\mathcal{A}_k\right]=o\left(\frac{1}{n}\right)$, and therefore by the union bound over the $<n$ possible values of $k$, the probability of an event violating the statement of the theorem is $o(1)$.

Let $S$, $|S|=k$, be a connected set in $G_p$. Since it is connected, it must have a spanning tree. By Lemma \ref{trees}, we have $n(ed)^{k-1}$ ways to choose a tree of size $k$, and we include its edges in $G_p$ with probability $p^{k-1}$. Now, consider the auxiliary random bipartite graph $\Gamma(S,p)$, whose one side is $S$, the other side is $N_G(S)$, and we include every edge of $G$ between $S$ and $N_G(S)$ in $\Gamma(S,p)$ with probability $p$. Then, we have that $|N_{G_p}(S)|\ge \nu\left(\Gamma(S,p)\right)$, where $\nu(H)$ is the matching number of $H$. Thus, it suffices to bound the probability that a maximum matching in $\Gamma(s,p)$ is smaller than $\frac{\epsilon^2k}{40\ln\left(\frac{1}{\epsilon}\right)}.$

Let us first bound the probability that $\nu\left(\Gamma(S,p)\right)=i$. This is at most the probability that $\Gamma(s,p)$ has a maximal by inclusion matching of size $i$. We have at most $\binom{e_G\left(S,N_G(S)\right)}{i}\le \binom{kd}{i}$ ways to choose a matching $M$ of size $i$. We then need to include the edges of the matching, which happens with probability $p^i$. Due to the maximality of $M$, every edge of $G$ between $S$ and $N_G(S)$ disjoint from $M$ is not in $\Gamma(S,p)$. Thus, we have at least $e_G(S, S^C)-2id$ edges that do not fall into $\Gamma(S,p)$. As in our previous arguments, by the expander mixing lemma:
\begin{align*}
   e_G\left(S, S^C\right)\ge \left(1-\frac{\epsilon^2}{45}\right)dk,
\end{align*}
using our assumptions on $\delta$ and $k$. Thus,
\begin{align*}
    P[\nu\left(\Gamma(S,p)\right)=i]\le \binom{kd}{i}p^i(1-p)^{\left(1-\frac{\epsilon^2}{45}\right)kd-2id}.
\end{align*}
Therefore, we obtain that
\begin{align*}
    P\left[\mathcal{A}_k\right]&\le n(ed)^{k-1}p^k\sum_{i=0}^{\frac{\epsilon^2k}{40\ln\left(\frac{1}{\epsilon}\right)}}\binom{kd}{i}p^i(1-p)^{\left(1-\frac{\epsilon^2}{45}\right)kd-2id}\\
    &\le n\left((1+\epsilon)\exp\left(1-(1+\epsilon)\left(1-\frac{\epsilon^2}{45}\right)\right)\right)^k\sum_{i=0}^{\frac{\epsilon^2k}{40\ln\left(\frac{1}{\epsilon}\right)}}\binom{kd}{i}p^i(1-p)^{-2id}\\
    &\le n\left((1+\epsilon)\exp\left(1-(1+\epsilon)\left(1-\frac{\epsilon^2}{45}\right)\right)\right)^k\left(1+\sum_{i=1}^{\frac{\epsilon^2k}{40\ln\left(\frac{1}{\epsilon}\right)}}\left(\frac{e^4k}{i}\right)^i\right).
\end{align*}
Observe that the ratio of consecutive terms in the above sum is:
\begin{align*}
    \frac{\left(\frac{e^4k}{i}\right)^i}{\left(\frac{e^4k}{i+1}\right)^{i+1}}= \frac{(i+1)^{i+1}}{e^4ki^i}\le \frac{1}{2},
\end{align*}
and thus the sum is at most twice the final term. Hence,
\begin{align*}
    P\left[\mathcal{A}_k\right]&\le n\left((1+\epsilon)\exp\left(1-(1+\epsilon)\left(1-\frac{\epsilon^2}{45}\right)\right)\right)^k\left(1+2\left(\frac{e^440\ln\left(\frac{1}{\epsilon}\right)}{\epsilon^2}\right)^{\frac{\epsilon^2k}{40\ln\left(\frac{1}{\epsilon}\right)}}\right)\\
    &\le 3n\left((1+\epsilon)\exp\left(-\epsilon+\frac{\epsilon^2}{40}+\frac{\epsilon^2}{40\ln\left(\frac{1}{\epsilon}\right)}\ln\left(\frac{41e^4\ln\left(\frac{1}{\epsilon}\right)}{\epsilon^2}\right)\right)\right)^k\\
    &\le 3n\left((1+\epsilon)\exp\left(-\epsilon+\frac{\epsilon^2}{4}\right)\right)^k\\
    &\le 3n\exp\left(-\frac{\epsilon^2k}{8}\right)=o\left(\frac{1}{n}\right),
\end{align*}
where the last inequality is due to $1+x\le \exp\left(x-\frac{3x^2}{8}\right)$ for $x$ small enough, and the equality is since we assumed $k\ge \frac{16\ln n}{\epsilon^2}$. Union bound over the $<n$ different values of $k$ completes the proof.
\end{proof}

We now turn to show that for edge-expansion, the same expansion factor holds \textbf{whp} for \textit{connected} sets up to size $\frac{12\epsilon n}{11}$. For subsets larger than $\frac{\epsilon^2n}{50}$, our arguments are of the same flavour as those used for separators in \cite{high girth} and utilise Lemma \ref{BFS}:
\begin{lemma}\label{edge-expansion-connected-sets}
\textit{Let $\epsilon>0$ be a small enough constant and let $\delta>0$ be such that $\delta\le \epsilon^4$. Let $G=(V,E)$ be an $(n,d,\lambda)$-graph with $\frac{\lambda}{d}\le \delta$. Let $p=\frac{1+\epsilon}{d}$. Form $G_p$ by retaining every edge of $G$ independently with probability $p$. Let $L_1$ be the giant component of $G_p$. Then, there exists an absolute constant $c>0$ such that \textbf{whp} for any $S\subseteq L_1$ such that $\frac{16\ln n}{\epsilon^2}\le |S| \le \frac{12\epsilon n}{11}$ and $G_p[S]$ is connected, we have
\begin{align*}
    \bigg|\partial_{G_p}(S)\bigg|\ge\frac{c\epsilon^2|S|}{\ln\left(\frac{1}{\epsilon}\right)}.
\end{align*}
}
\end{lemma}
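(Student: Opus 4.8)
The plan is to split the range $\frac{16\ln n}{\epsilon^2}\le |S|\le\frac{12\epsilon n}{11}$ into two pieces and handle each with a different tool. For the small range, $\frac{16\ln n}{\epsilon^2}\le |S|\le\frac{\epsilon^2 n}{50}$, I would simply invoke Property 1 of Theorem \ref{vertex-expansion}, which is already proven: a connected set $S$ in that range has $|N_{G_p}(S)|\ge\frac{c\epsilon^2|S|}{\ln(1/\epsilon)}$, and since every vertex of $N_{G_p}(S)$ is incident (in $G_p$) to at least one edge of $\partial_{G_p}(S)$, we get $|\partial_{G_p}(S)|\ge|N_{G_p}(S)|\ge\frac{c\epsilon^2|S|}{\ln(1/\epsilon)}$. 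So the small range is free.

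For the large range, $\frac{\epsilon^2 n}{50}\le |S|\le\frac{12\epsilon n}{11}$, the idea (following the separator-style argument of \cite{high girth}) is that if a connected set $S$ in $G_p$ had a small edge boundary, $|\partial_{G_p}(S)|<\frac{c\epsilon^2|S|}{\ln(1/\epsilon)}$, then $G_p[S]$ would itself be a connected component, or very nearly so, of a size forbidden by Lemma \ref{BFS}. More precisely: consider the set $S$ and look at the component $B$ of $G_p$ containing $S$. Using Lemma \ref{incident-edges} (whp every connected $S$ with $|S|=k\ge\ln n$ has $e_{G_p}(S)+e_{G_p}(S,S^C)<10k$, which controls the number of boundary edges we must kill), if $\partial_{G_p}(S)$ were small we could remove those few boundary edges and still leave $S$ connected, exhibiting a connected component of $G_p$ — or, after a short BFS-type continuation, a component — whose size lies in the window $\left[\frac{\epsilon^2 n}{16},\frac{11\epsilon n}{10}\right]$ where $\frac{\epsilon^2n}{50}$ and $\frac{12\epsilon n}{11}$ comfortably sit after adjusting constants; Lemma \ref{BFS} then says this happens with probability at most $\exp\left(-\frac{\epsilon^2 k}{203^2}\right)$, which is $o(1/n)$ once $|S|=k\ge\frac{\epsilon^2 n}{50}$, and a union bound over the $<n$ values of $k$ finishes it. The delicate point is that $L_1$ itself is a connected set of this size with small boundary (it is a whole component, so $|\partial_{G_p}(L_1)|=0$) and $|L_1|=(1+o_\epsilon(1))2\epsilon n$ can be just above $\frac{12\epsilon n}{11}$ — this is exactly why the statement caps at $\frac{12\epsilon n}{11}$ and why the giant's size is slightly below $2\epsilon n$: the window in Lemma \ref{BFS} tops out at $\frac{11\epsilon n}{10}$, so a genuinely small-boundary connected set of size in $\left[\frac{\epsilon^2n}{50},\frac{12\epsilon n}{11}\right]$ would force a component strictly inside the forbidden window.

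Concretely, for a fixed $k$ in the large range I would union-bound over choices of $S$: by Lemma \ref{trees} there are at most $n(ed)^{k-1}$ spanning trees, contributing $p^{k-1}$; then, exactly as in the proof of Property 1, I would bound the probability that the edge boundary is at most $m:=\frac{c\epsilon^2 k}{\ln(1/\epsilon)}$ by choosing which $\le m$ of the $\le kd$ potential boundary edges survive (a factor $\binom{kd}{m}p^m$) and demanding that all remaining $e_G(S,S^C)-2dm\ge\left(1-o_\epsilon(1)\right)dk$ edges are absent (a factor $(1-p)^{(1-o_\epsilon(1))dk}$), where $e_G(S,S^C)\ge(1-o(1))dk$ comes from the expander mixing lemma since $k\le\frac{12\epsilon n}{11}=o(n)$. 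Collecting terms gives a bound of the shape $n\left((1+\epsilon)\exp(-\epsilon+O(\epsilon^2))\right)^k\cdot\left(\tfrac{Ce^4\ln(1/\epsilon)}{\epsilon^2}\right)^{m}\le n\exp\left(-\tfrac{\epsilon^2 k}{8}\right)$ provided $c$ is chosen small enough that $\tfrac{c\epsilon^2}{\ln(1/\epsilon)}\ln\!\left(\tfrac{Ce^4\ln(1/\epsilon)}{\epsilon^2}\right)\le\tfrac{\epsilon^2}{8}$; since $k\ge\frac{\epsilon^2 n}{50}\gg\ln n$ this is $o(1/n)$. One has to be slightly careful that $S$ need not be a component, so the cleanest route is actually the Lemma \ref{BFS} reduction of the previous paragraph rather than a raw union bound — either works, but the BFS route automatically handles the "$S$ plus a bit more is the component" issue.

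The main obstacle I anticipate is bookkeeping the constants so that the two pieces overlap and so that the "small boundary $\Rightarrow$ forbidden-size component" reduction lands strictly inside the window of Lemma \ref{BFS}: one must verify that after deleting $<\frac{c\epsilon^2 k}{\ln(1/\epsilon)}$ edges the residual connected piece still has size comparable to $k$ (using Lemma \ref{incident-edges} to ensure $S$ is not too "loosely" attached, so that a bounded number of edge deletions cannot shatter it into tiny pieces — in fact one argues about the largest residual fragment), and that $\frac{\epsilon^2 n}{50}$ and $\frac{12\epsilon n}{11}$ both map into $\left[\frac{\epsilon^2 n}{16},\frac{11\epsilon n}{10}\right]$ after accounting for the additive $O\!\left(\frac{c\epsilon^2 n}{\ln(1/\epsilon)}\right)$ slack. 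None of this is deep, but it is where the specific numerical choices in Lemma \ref{BFS} and in the hypotheses ($\delta\le\epsilon^4$, the $\frac{12}{11}$ and $\frac{11}{10}$ fractions) have to be reconciled.
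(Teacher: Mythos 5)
Your decomposition into the two ranges and your treatment of the small range (quoting Property 1 of Theorem \ref{vertex-expansion} and noting $|\partial_{G_p}(S)|\ge|N_{G_p}(S)|$) match the paper exactly. The gap is in the large range, and it is the heart of the matter. Your primary route says: if $\partial_{G_p}(S)$ is small, ``remove those few boundary edges'' and exhibit a connected component of forbidden size, then apply Lemma \ref{BFS}. But Lemma \ref{BFS} is a statement about the distribution of $G_p$ itself; after you delete an adversarially chosen set of edges you are no longer looking at a sample of $G_p$, so the lemma says nothing about the modified graph. (And in $G_p$ itself the component containing $S$ is $L_1$, of size about $2\epsilon n$, which lies outside the forbidden window, so no forbidden component actually appears.) The paper's fix --- which is the one idea your proposal is missing --- is a sprinkling/coupling argument: set $p_1=\frac{1+\epsilon-\epsilon^2/50}{d}$ and realize $G_{p_1}$ by thinning $G_p$ with retention probability $\rho=p_1/p$. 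Conditioned on the bad event for $S$, the probability that the thinning deletes all $<\frac{c\epsilon^2 k}{\ln(1/\epsilon)}$ boundary edges while keeping a spanning tree of $S$ is at least $(1-\rho)^{c\epsilon^2k/\ln(1/\epsilon)}\rho^{k}\ge\exp\left(-\frac{\epsilon^2k}{300^2}\right)$, in which case $S$ \emph{is} a component of $G_{p_1}$ of forbidden size; comparing with the bound $\exp\left(-\frac{\epsilon^2k}{204^2}\right)$ that Lemma \ref{BFS} gives for $G_{p_1}$ yields $P[\mathcal{A}_k]\le\exp\left(-\frac{\epsilon^2k}{204^2}+\frac{\epsilon^2k}{300^2}\right)=o(1/n)$. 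Note that this comparison is exactly what forces the constant $c$ to be tiny ($c=1/400^2$ in the paper): the ``cost'' of killing the boundary must be strictly cheaper than the probability Lemma \ref{BFS} allows for a forbidden component. None of this bookkeeping appears in your sketch, and without the coupling the reduction does not go through.

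Your fallback --- the raw union bound over spanning trees --- does not work in the large range, contrary to your claim that ``either works.'' The error is the assertion that $k\le\frac{12\epsilon n}{11}=o(n)$ gives $e_G(S,S^C)\ge(1-o(1))dk$. For $k=\Theta(\epsilon n)$ the expander mixing lemma only gives $e_G(S,S^C)\ge\left(1-\frac{k}{n}-O(\epsilon^2)\right)dk$, and with $\frac{k}{n}$ as large as $\frac{12\epsilon}{11}$ the first-moment computation becomes
\begin{align*}
n\bigl(e(1+\epsilon)\bigr)^{k-1}(1-p)^{\left(1-\frac{12\epsilon}{11}\right)dk}\approx n\left((1+\epsilon)\exp\left(\tfrac{\epsilon}{11}+O(\epsilon^2)\right)\right)^k,
\end{align*}
which is exponentially \emph{large} in $k$: the entropy of choosing the tree beats the savings from the absent boundary edges once $k/n$ is comparable to $\epsilon$. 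This is precisely why the paper proves Lemma \ref{BFS} by a process (BFS) argument rather than a union bound over sets, and why the union-bound proof of Property 1 stops at $|S|\le\frac{\epsilon^2 n}{50}$. So the large range genuinely requires the sprinkling reduction to Lemma \ref{BFS}; as written, your proposal has the right target but not a valid bridge to it.
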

\begin{proof}
For $\frac{16\ln n}{\epsilon^2}\le |S|\le \frac{\epsilon^2n}{50}$, this follows immediately from the first property of Theorem 1. We now assume that $\frac{\epsilon^2n}{50}\le |S|\le \frac{12\epsilon n}{11}$. Let $p_1=\frac{1+\epsilon-\epsilon^2/50}{d}$, and let $\rho=\frac{p_1}{p}$. Let $G_{p_1}$ be the subgraph obtained by retaining each edge of $G$ with probability $p_1$. Observe that $G_{p_1}$ can also be obtained by drawing a random graph $G_p$, and then by retaining each edge with probability $\rho$. 

Let $\mathcal{A}_k$ be the following event addressing $G_p$:
\begin{align*}
    \mathcal{A}_k=\Bigg\{\exists S\subseteq V(G), |S|=k, S\text{ is connected in } G_p \ \& \ \bigg|\partial_{G_p}(S)\bigg|<\frac{\epsilon^2k}{400^2\ln\left(\frac{1}{\epsilon}\right)}\Bigg\}.
\end{align*}
Let $\mathcal{B}_k$ be the following event addressing $G_{p_1}$:
\begin{align*}
    \mathcal{B}_k=\left\{\exists S, |S|=k \ \& \ S\text{ is a connected component in } G_{p_1}\right\}.
\end{align*}
Note that $\rho=1-\frac{\epsilon^2}{50}+O(\epsilon^3)$. Thus, going from $G_p$ to $G_{p_1}$ by retaining each edge of $G_p$ with probability $\rho$, given $\mathcal{A}_k$, the probability that $S$ is separated from $L_1$ and remains connected is at least:
\begin{align*}
    (1-\rho)^{\frac{\epsilon^2k}{400^2\ln\left(\frac{1}{\epsilon}\right)}}\rho^{k}&\ge \exp\left(-\left(\frac{\epsilon^2}{40}+\ln\left(\frac{1}{40\epsilon^2}\right)\frac{\epsilon^2}{400^2\ln\left(\frac{1}{\epsilon}\right)}\right)k\right)\\
    &\ge \exp\left(-\frac{\epsilon^2k}{300^2}\right).
\end{align*}
Hence,
\begin{align*}
    P\left[\mathcal{B}_k|\mathcal{A}_k\right]\ge \exp\left(-\frac{\epsilon^2k}{300^2}\right).
\end{align*}
Since $p_1$ can be rewritten as $p_1=\frac{1+\epsilon'}{d}$ with $\epsilon'=\epsilon-\frac{\epsilon^2}{50}$, we have by Lemma \ref{BFS} that for $\frac{\epsilon^2n}{15}\le k \le \frac{3\epsilon n}{2}$, 
\begin{align*}
    P\left[\mathcal{B}_k\right]\le \exp\left(-\frac{\epsilon'^2k}{203^2}\right)\le\exp\left(-\frac{\epsilon^2k}{204^2}\right).
\end{align*}
Thus,
\begin{align*}
    P\left[\mathcal{A}_k\right] \le \exp\left(-\frac{\epsilon^2k}{204^2}+\frac{\epsilon^2k}{300^2}\right)=o\left(\frac{1}{n}\right).
\end{align*}
Union bound over the $<n$ possible values of $k$ completes the proof of the lemma, with $c=\frac{1}{400^2}$.
\end{proof}

Before proving Theorem \ref{edge-expansion}, we require the following lemma as well:
\begin{lemma} \label{volume-lemma}
\textit{Let $\epsilon>0$ be a small enough constant and let $\delta\le \epsilon^4$. Let $G=(V,E)$ be an $(n,d,\lambda)$-graph, with $\frac{\lambda}{d}\le \delta$. Let $p=\frac{1+\epsilon}{d}$. Form $G_p$ by retaining each edge of $G$ independently with probability $p$. Let $c\le \frac{1}{70}$ be a positive constant independent of $\epsilon$. Define the random variable $X$ to be the maximal volume of a family of vertex disjoint subsets $S$, such that $\frac{\ln\left(\frac{1}{\epsilon}\right)}{c\epsilon^2}\le |S| \le \frac{16\ln n}{\epsilon^2}$, $G_p[S]$ is connected and $\big|\partial_{G_p}(S)\big|< \frac{c\epsilon^2}{\ln\left(\frac{1}{\epsilon}\right)}$. Then \textbf{whp} $X\le 2\epsilon^3 n$.}
\end{lemma}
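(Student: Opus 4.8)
The plan is to bound the expected volume of ``bad'' sets and apply a first-moment argument, but with a twist: since these sets have size only $\Theta_\epsilon(\ln n)$ up to $O(\ln n/\epsilon^2)$, a single bad set has probability bounded away from $0$ only polynomially in $n$, so a naive union bound over all such sets fails. Instead I would fix a target volume $v = 2\epsilon^3 n$ and bound the probability that there exists a \emph{family} of vertex-disjoint bad sets whose total size exceeds $v$. Given such a family, let $k_1,\dots,k_m$ be the sizes of its members, with $\sum_j k_j > v$; each member spans a tree (by Lemma \ref{trees} there are at most $n(ed)^{k_j-1}$ choices, included with probability $p^{k_j-1}$) and has small edge boundary, which — exactly as in the proof of Property 1 of Theorem \ref{vertex-expansion} and Lemma \ref{edge-expansion-connected-sets} — forces roughly $(1-\frac{\epsilon^2}{45})k_j d$ non-edges out of $G_p$ (via the expander mixing lemma), contributing a factor like $(1-p)^{(1-o(1))k_j d}$.

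The key point is that the disjointness lets these estimates \emph{multiply}: the probability attached to a fixed family of total size $K=\sum_j k_j$ is at most a product over $j$ of terms of the form
\begin{align*}
n(ed)^{k_j-1} p^{k_j-1} (1-p)^{(1-\frac{\epsilon^2}{45})k_j d - 2 d \cdot \frac{c\epsilon^2 k_j}{\ln(1/\epsilon)}} \binom{k_j d}{\frac{c\epsilon^2 k_j}{\ln(1/\epsilon)}} p^{\frac{c\epsilon^2 k_j}{\ln(1/\epsilon)}}
\le n \left((1+\epsilon)\exp\left(-\epsilon + \tfrac{\epsilon^2}{4}\right)\right)^{k_j} \le n\exp\left(-\tfrac{\epsilon^2 k_j}{8}\right),
\end{align*}
so the product over the family is at most $n^m \exp(-\epsilon^2 K/8)$. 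Since each member has size at least $\frac{\ln(1/\epsilon)}{c\epsilon^2}$, we have $m \le \frac{c\epsilon^2 K}{\ln(1/\epsilon)}$, hence $n^m \le \exp\left(\frac{c\epsilon^2 K \ln n}{\ln(1/\epsilon)}\right)$; using $c \le \frac{1}{70}$ this is comfortably dominated by $\exp(-\epsilon^2 K/16)$, so the probability for a fixed family is at most $\exp(-\epsilon^2 K/16)$. It then remains to count families: the number of ways to choose $m$ disjoint vertex sets with sizes summing to $K$ is at most (choosing, for each, a root and then a tree on $G$) already absorbed into the $n(ed)^{k_j-1}$ factors above, so summing over $K > v = 2\epsilon^3 n$ and over the (at most $n$) admissible compositions gives a total of at most $n \cdot \exp(-\epsilon^2 \cdot 2\epsilon^3 n /16) = o(1)$.

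The main obstacle I anticipate is bookkeeping the combinatorial count of \emph{families} of disjoint sets cleanly — one must make sure that the tree-choice factors $n(ed)^{k_j-1}$ genuinely cover the choice of which vertices each member occupies (they do, since specifying a spanning tree specifies the vertex set), and that the boundary-size condition is applied to each member with its own $k_j$ rather than to the union (the union need not be connected, so the expander mixing lemma must be invoked member-by-member). A secondary subtlety is that the lower bound $|S| \ge \frac{\ln(1/\epsilon)}{c\epsilon^2}$ is exactly what is needed to make $m$ small enough to beat the $n^m$ factor; one should check that the constant $\frac{1}{70}$ leaves enough room, i.e. that $\frac{c \ln n}{\ln(1/\epsilon)} < \frac{1}{16}$ is \emph{not} what is needed (that would be false for large $n$) — rather, the $\exp(-\epsilon^2 K/8)$ per-member savings must outweigh $n^{c\epsilon^2 K/\ln(1/\epsilon)}$, which holds once $K \ge \frac{16 \ln n}{\epsilon^2} \cdot (\text{number of members})$ is traded correctly; since each member already has $k_j \ge \frac{\ln(1/\epsilon)}{c\epsilon^2}$ the per-member budget $\exp(-\epsilon^2 k_j/8)$ versus $n$ is won precisely because we only need the final sum over $K > 2\epsilon^3 n$ to be $o(1)$, and $2\epsilon^3 n \cdot \epsilon^2/16 \gg \ln n$. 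I would organize the write-up so this comparison is made once, globally, rather than per member.
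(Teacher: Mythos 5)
Your overall strategy --- a union bound over entire families of disjoint bad sets --- is not the paper's route, and as written it has a fatal counting gap. The members of the family may have size as small as $\frac{\ln(1/\epsilon)}{c\epsilon^2}$, a constant, so a family of total volume $K\approx 2\epsilon^3 n$ may have $m=\Theta_{\epsilon}(n)$ members. Your per-member bound carries a factor of $n$ (the choice of root for each spanning tree), so the count over ordered families carries a factor $n^{m}=\exp\left(\Theta_{\epsilon}(n\ln n)\right)$, which is not absorbed by the probability savings $\exp\left(-\epsilon^2K/8\right)=\exp\left(-\Theta_{\epsilon}(n)\right)$. You notice this tension yourself, but the resolution you sketch amounts to requiring the average member size to be at least $\frac{16\ln n}{\epsilon^2}$ --- that is the \emph{upper} bound on member size, not the lower bound, so it fails exactly for families built from the smallest admissible sets (which is where the lemma has content: the expected number of bad sets of constant size is $\Theta_{\epsilon}(n)$, so such sets certainly exist \textbf{whp}, and one can only hope to bound their total volume). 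The claim that there are ``at most $n$ admissible compositions'' of $K$ is also wrong; their number is exponential in $m$. The approach could in principle be salvaged by counting \emph{unordered} families, i.e.\ dividing by $m!$ so that $n^{m}/m!\le (en/m)^{m}=\exp\left(O(m\ln(1/\epsilon))\right)=\exp\left(O(c\epsilon^2K)\right)$, which is beaten by $\exp\left(-\epsilon^2K/8\right)$ for $c$ small --- but this step is absent from your write-up.

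A second, independent gap is the claim that disjointness lets the per-member estimates multiply. The events are not independent: an edge of $G$ joining two members of the family lies in the edge boundary of \emph{both}, so the ``this edge is absent'' requirements of different members overlap, and the product of the per-member probabilities counts the corresponding factor as $(1-p)^2$ instead of $(1-p)$. The product is therefore a \emph{lower} bound on the joint probability, not an upper bound. (This is repairable --- the expander mixing lemma shows the number of inter-member edges is only $O(\epsilon^3Kd)$, costing a harmless correction $\exp\left(O(\epsilon^3K)\right)$ --- but it must be addressed explicitly.) The paper avoids both issues entirely: it bounds $\mathbb{E}X$ by a first-moment computation over \emph{single} bad trees (no families, hence no independence or family-counting issues), obtaining $\mathbb{E}X\le \epsilon^3n$, and then deduces $X\le 2\epsilon^3 n$ \textbf{whp} from the Azuma--Hoeffding inequality applied to the edge-exposure martingale, using that changing one edge alters $X$ by at most $\frac{32\ln n}{\epsilon^2}$. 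I would recommend that route.
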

\begin{proof}
We begin with an upper bound on $\mathbb{E}X$. Clearly, evaluating the expected total volume of all trees in $G_p$ of order $\frac{\ln\left(\frac{1}{\epsilon}\right)}{c\epsilon^2}\le k \le \frac{16\ln n}{\epsilon^2}$ whose edge boundary in $G_p$ is at most $\frac{c\epsilon^2k}{\ln\left(\frac{1}{\epsilon}\right)}$, provides an upper bound.

By Lemma \ref{trees}, we have $n(ed)^{k-1}$ ways to choose the tree. We include its edges in $G_p$ with probability $p^{k-1}$. Denote by $m$ the number of edges in the boundary that fall into $G_p$. Then, we have $\binom{kd}{m}$ ways to choose the edges in the boundary that fall into $G_p$, and this happens with probability $p^m$. Observe that 
\begin{align*}
    \binom{kd}{m}p^m\le \left(\frac{ek(1+\epsilon)}{m}\right)^m\le \left(\frac{ek(1+\epsilon)}{\frac{c\epsilon^2k}{\ln\left(\frac{1}{\epsilon}\right)}}\right)^{\frac{c\epsilon^2k}{\ln\left(\frac{1}{\epsilon}\right)}},
\end{align*}
since $(a/x)^x$ is strictly increasing for $0<x<a/e$, and we assumed that $m\le \frac{c\epsilon^2k}{\ln\left(\frac{1}{\epsilon}\right)}$. Furthermore, we have that all but $m$ of the edges in $E_G(S,S^C)$ do not fall into $G_p$. By the expander mixing lemma, we obtain in a manner similar to the previous arguments that:
\begin{align*}
    e_G(S,S^C)-m&\ge \left(1-\frac{k}{n}-2\delta\right)kd-m\\
    &\ge(1-\epsilon^3)kd,
\end{align*}
where we used our assumptions on $k$ and $\delta$ and $m$. Finally, we consider all the $<k$ possible values of $m$. Hence, performing calculations similar to those we employed in proving Property 1 of Theorem \ref{vertex-expansion}, we obtain:
\begin{align*}
    \mathbb{E}X&\le  \sum_{k=\frac{\ln\left(\frac{1}{\epsilon}\right)}{c\epsilon^2}}^{\frac{16\ln n}{\epsilon^2}}k\cdot n(ed)^{k-1}p^{k-1}\cdot k(1-p)^{(1-\epsilon^3)kd}\left(\frac{ek(1+\epsilon)}{\frac{c\epsilon^2k}{\ln\left(\frac{1}{\epsilon}\right)}}\right)^{\frac{c\epsilon^2k}{\ln\left(\frac{1}{\epsilon}\right)}}\\
    &\le n\sum_{k=\frac{\ln\left(\frac{1}{\epsilon}\right)}{c\epsilon^2}}^{\frac{16\ln n}{\epsilon^2}}k^2\exp\left(-\frac{\epsilon^2k}{10}\right)\\
    &\le n\cdot 5\frac{\ln^2\left(\frac{1}{\epsilon}\right)}{c^2\epsilon^4}\exp\left(-\frac{\epsilon^2}{10}\cdot \frac{\ln\left(\frac{1}{\epsilon}\right)}{c\epsilon^2}\right)\\
    & \le n\cdot \frac{5}{c^2}\epsilon^{\frac{1}{10c}-5} \le \epsilon^3n,
\end{align*}
assuming that $c\le \frac{1}{90}$. 

We now proceed to show that $X$ is tightly concentrated. Indeed, consider the edge exposure martingale for $G_p$. Changing one edge in $G$ can add or delete at most two sets to the family in question (recalling that it is a family of vertex disjoint subsets). Therefore, at each step of the exposure, we change the size of $X$ by at most $\frac{32\ln n}{\epsilon^2}$. Thus, by a variant of the Azuma-Hoeffding inequality (see, for example, Theorem 3.9 of \cite{azuma survey}), we have that:
\begin{align*}
    P\left[X\ge2\epsilon^3n\right]&\le P\left[X\ge\mathbb{E}X+\epsilon^3n\right]\\
    &\le \exp\left(-\frac{\epsilon^{6}n^2}{nd\cdot\frac{1+\epsilon}{d}\left(\frac{32\ln n}{\epsilon^2}\right)^2+2\cdot\frac{32\ln n}{\epsilon^2}\cdot\epsilon^3n}\right)=o\left(\frac{1}{n}\right).
\end{align*}
\end{proof}

We are now ready to prove Theorem \ref{edge-expansion}:
\begin{proof}[\textbf{\textit{Proof of Theorem \ref{edge-expansion}}}]
Let $c_{(\ref{edge-expansion-connected-sets})}$ be as in Lemma \ref{edge-expansion-connected-sets}. Consider the connected components of $G_p[S]$. By Lemma \ref{edge-expansion-connected-sets}, \textbf{whp} every component of $G_p[S]$ whose size is $k\ge \frac{16 \ln n}{\epsilon^2}$ contributes at least $\frac{c_{(\ref{edge-expansion-connected-sets})}\epsilon^2k}{\ln\left(\frac{1}{\epsilon}\right)}$ edges to $\partial_{G_p}(S)$, since none of these edges can be in $S$ (otherwise, it would not be a component of $G_p[S]$). Furthermore, since $S\subseteq L_1$ and $L_1$ is connected, every connected component of $G_p[S]$ contributes at least one edge to $\partial_{G_p}(S)$. Therefore, any component of $G_p[S]$ whose size is at most $k\le \frac{\ln\left(\frac{1}{\epsilon}\right)}{c_{(\ref{edge-expansion-connected-sets})}\epsilon^2}$ contributes at least $\frac{c_{(\ref{edge-expansion-connected-sets})}\epsilon^2k}{\ln\left(\frac{1}{\epsilon}\right)}$ edges to $\partial_{G_p}(S)$ as well. We are thus left with connected components of $G_p[S]$, whose size is between  $\frac{\ln\left(\frac{1}{\epsilon}\right)}{c_{(\ref{edge-expansion-connected-sets})}\epsilon^2}$ and $\frac{16\ln n}{\epsilon^2}$.

For any $v\in S$, we define $C_v[S]$ to be the connected component of $G_p[S]$ which includes $v$. Let $B\subseteq S$ be the following set:
\begin{align*}
    B=\left\{v\in S: \big|C_v[S]\big|=k, \frac{\ln\left(\frac{1}{\epsilon}\right)}{c_{(\ref{edge-expansion-connected-sets})}\epsilon^2}\le k \le \frac{16\ln n}{\epsilon^2}, \bigg|\partial(C_v[S])\bigg|< \frac{c_{(\ref{edge-expansion-connected-sets})}\epsilon^2 k}{\ln\left(\frac{1}{\epsilon}\right)}\right\}.
\end{align*}
By Lemma \ref{volume-lemma}, we have that \textbf{whp} $|B|\le 2\epsilon^3n.$

We can thus conclude that \textbf{whp} all but at most $2\epsilon^3n$ of the vertices of $S$ belong to connected components $C_v[S]$ of $G_p[S]$ such that $\bigg|\partial_{G_p}\left(C_v[S]\right)\bigg|\ge \frac{c_{(\ref{edge-expansion-connected-sets})}\epsilon^2\big|C_v[S]\big|}{\ln\left(\frac{1}{\epsilon}\right)}$. Hence, for any $S\subseteq L_1$ such that $\frac{\epsilon^2 n}{50}\le |S| \le \frac{12\epsilon n}{11}$, we have that \textbf{whp}:
\begin{align*}
    \bigg|\partial_{G_p}(S)\bigg|\ge \frac{c_{(\ref{edge-expansion-connected-sets})}\epsilon^2\left(|S|-2\epsilon^3n\right)}{\ln\left(\frac{1}{\epsilon}\right)}\ge \frac{c_{(\ref{edge-expansion-connected-sets})}\epsilon^2|S|}{2\ln\left(\frac{1}{\epsilon}\right)},
\end{align*}
proving the second property of the theorem with $c=c_{(\ref{edge-expansion-connected-sets})}/2$.
\end{proof}

The second part of Theorem \ref{vertex-expansion}, linear sized (not necessarily connected) sets, will build upon Theorem \ref{edge-expansion}:
\begin{proof}[Proof of Theorem \ref{vertex-expansion}: Property 2]
By Theorem \ref{edge-expansion}, for any subset $S\subseteq L_1$ such that $\frac{\epsilon^2n}{50}\le |S| \le \frac{12\epsilon n}{11}$, we have that \textbf{whp}
\begin{align*}
    \bigg|\partial_{G_p}(S)\bigg|\ge \frac{c\epsilon^2|S|}{\ln\left(\frac{1}{\epsilon}\right)}.
\end{align*}
Let $D_{t}\left(N_{G_p}(S)\right):=\left\{v\in N_{G_p}(S):d_{G_p}(v)>t\right\}$. By Lemma \ref{high-degree}, \textbf{whp},
\begin{align*}
    \sum_{v\in D_{5\ln\left(\frac{1}{\epsilon}\right)}\left(N_{G_p}(S)\right)}d_{G_p}(v)\le n\epsilon^{\ln\ln\left(\frac{1}{\epsilon}\right)}.
\end{align*}
Thus, for any subset $S\subseteq L_1$ of relevant size, we have that \textbf{whp}
\begin{align*}
    |N_{G_p}(S)|\ge \frac{\bigg|\partial_{G_p}(S)\bigg|-n\epsilon^{\ln\ln\left(\frac{1}{\epsilon}\right)}}{5\ln\left(\frac{1}{\epsilon}\right)}\ge\frac{c\epsilon^2|S|}{6\ln^2\left(\frac{1}{\epsilon}\right)},
\end{align*}
completing the proof.
\end{proof}

We conclude this section with the proof of Theorem \ref{expanding-subgraph}. The proof uses some ideas present in \cite{expanders} and Theorem 1.4 of \cite{EKK}.
\begin{proof}[Proof of Theorem \ref{expanding-subgraph}]
 Let $M\subseteq L_1$ be a maximal set such that $|M|< \frac{\epsilon n}{6}$ and $|N_{G_p}(M)|< \frac{\epsilon^2|M|}{c\ln^2\left(\frac{1}{\epsilon}\right)}.$ Let $L_1'=L_1\setminus M$. Assume there is some subset $B\subseteq L_1'$ with $|B|\le \frac{|V(L_1')|}{2}$ and $|N_{L_1'}(B)|< \frac{\epsilon^2|B|}{c\ln^2\left(\frac{1}{\epsilon}\right)} .$ Then,
\begin{align*}
    \big|N_{G_p}(M\cup B)\big|< \frac{\epsilon^2|M|}{c\ln^2\left(\frac{1}{\epsilon}\right)}+\frac{\epsilon^2|B|}{c\ln^2\left(\frac{1}{\epsilon}\right)}=\frac{\epsilon^2|M\cup B|}{c\ln^2\left(\frac{1}{\epsilon}\right)}.
\end{align*}
Therefore, due to the maximality of $M$, we obtain $|M\cup B|\ge \frac{\epsilon n}{6}$. However, by Theorem \ref{vertex-expansion}, \textbf{whp} every subset $S\subseteq L_1$ with $\frac{\epsilon^2n}{50}\le |S| \le \frac{12\epsilon n}{11}$ has $|N_{G_p}(S)|\ge \frac{\epsilon^2|S|}{c\ln^2\left(\frac{1}{\epsilon}\right)}$. Hence, $|M\cup B|> \frac{12\epsilon n}{11}$. On the other hand, by our choice of $B$ and $M$, we have that:
\begin{align*}
    |M\cup B|&\le|M|+\frac{|V(L_1)|-|M|}{2}\\
    &=\frac{|V(L_1)|+|M|}{2}\\
    &\le \frac{|V(L_1)|}{2}+\frac{\epsilon n}{12}.
\end{align*}
By Theorem \ref{FKM-thm}, \textbf{whp} $|V(L_1)|\le 2\epsilon n$. Therefore, $|M\cup B|\le \frac{13\epsilon n}{12}<\frac{12\epsilon n}{11},$ a contradiction. 
Thus, \textbf{whp} $L_1'$ has the desired expansion property, and by Theorem \ref{FKM-thm} \textbf{whp} $|L_1'|=|L_1|-|M|\ge \frac{23\epsilon n}{12}-\frac{\epsilon n}{6}=\frac{7\epsilon n}{4}$, concluding the proof.
\end{proof}

\section{The Diameter of the Giant}
Equipped with the expansion properties of connected subsets, we are now able to obtain a bound on the diameter of the largest component:
\begin{proof}[\textit{\textbf{Proof of Theorem \ref{diameter}}}] 
Let $B(v,r)$ denote the ball of radius $r$ around $v$ in $G_p$. By Theorem \ref{FKM-thm}, \textbf{whp} we have that $|L_1|< 2\epsilon n$. Hence, if we can show that for any $v\in V(L_1)$, 
\begin{align*}
    \Bigg|B\left(v,C\frac{\ln\left(\frac{1}{\epsilon}\right)\ln n}{\epsilon^2}\right)\Bigg|\ge \epsilon n,
\end{align*}
for some large enough constant $C>0$, then every two balls of such radius in $L_1$ intersect, and hence the diameter of the giant component of $G_p$ is at most
\begin{align*}
    2C\frac{\ln\left(\frac{1}{\epsilon}\right)\ln n}{\epsilon^2}.
\end{align*}
By Theorem \ref{vertex-expansion}, for $\frac{16\ln n}{\epsilon^2}\le \big|B(v,r)\big| \le \frac{\epsilon^2 n}{50}$, we have \textbf{whp} that $\big|N_{G_p}\left(B(v,r)\right)\big|\ge \frac{c\epsilon^2|B(v,r)|}{\ln\left(\frac{1}{\epsilon}\right)}$. Thus, we have for $r\ge \frac{16\ln n}{\epsilon^2}$ that \textbf{whp}:
\begin{align*}
    \big|B(v, r+1)\big|\ge \min\left\{\frac{\epsilon^2n}{50}, \left(1+\frac{c\epsilon^2}{\ln\left(\frac{1}{\epsilon}\right)}\right)\big|B(v,r)\big|\right\}.
\end{align*}
Hence,
\begin{align*}
    \Bigg|B\left(v,\left(\frac{16}{\epsilon^2}+\frac{1}{\ln\left(1+\frac{c\epsilon^2}{\ln(1/\epsilon)}\right)}\right)\ln n\right)\Bigg|\ge \frac{\epsilon^2n}{50}.
\end{align*}
Set $r_1=\left(\frac{16}{\epsilon^2}+\frac{1}{\ln\left(1+\frac{c\epsilon^2}{\ln(1/\epsilon)}\right)}\right)\ln n$. By Theorem \ref{vertex-expansion}, for any subset $S\subseteq L_1$ with $\frac{\epsilon^2 n}{50}\le |S|\le \frac{12\epsilon n}{11}$, we have that \textbf{whp} $\big|N_{G_p}(S)\big|\ge \frac{c\epsilon^2|B(v,r)|}{\ln^2\left(\frac{1}{\epsilon}\right)}$. Therefore, for any $r\ge r_1$, we have that \textbf{whp}:
\begin{align*}
    \big|B(v,r+1)\big|\ge\min\left\{\frac{12\epsilon n}{11}, \left(1+\frac{c\epsilon^2}{\ln^2\left(\frac{1}{\epsilon}\right)}\right)\big|B(v,r)\big|\right\},
\end{align*}
and thus for some constant $K$ large enough, \textbf{whp}
\begin{align*}
    \Bigg|B\left(v,r_1+K\ln\left(\frac{1}{\epsilon}\right)\right)\Bigg|\ge \frac{12\epsilon n}{11}>\epsilon n.
\end{align*}
Thus, \textbf{whp}, the diameter of $L_1$ is at most
\begin{align*}
    2\left(r_1+K\ln\left(\frac{1}{\epsilon}\right)\right)&=2\left(\frac{16}{\epsilon^2}+\frac{1}{\ln\left(1+\frac{c\epsilon^2}{\ln(1/\epsilon)}\right)}\right)\ln n+2K\ln\left(\frac{1}{\epsilon}\right)\\
    &\le C'\frac{\ln\left(\frac{1}{\epsilon}\right)\ln n}{\epsilon^2},
\end{align*}
for large enough absolute constant $C'$.
\end{proof}

\section{Mixing Time of the Lazy Random Walk}
In this section we prove Theorem 5.

We start with some definitions and brief background (see \cite{LPW} for extensive background on Markov chains and mixing time). Given a graph $G$, the \textit{lazy simple random walk} on $G$ is a Markov chain starting at a vertex $v_0$ chosen according to some distribution $\sigma$, such that for any vertex $v\in V(G)$ the walk stays at $v$ with probability $\frac{1}{2}$, and otherwise moves to a uniformly chosen random neighbour $u$ of $v$. Hence, the transition probability from $v$ to $u$ satisfies $P(v\to u)=\frac{1}{2d(v)}$. For $G$ connected, this Markov chain is irreducible and ergodic and as such has a limit distribution, which we call the \textit{stationary distribution} $\pi$, given by $\pi(v)=\frac{d(u)}{2e(G)}$ for any $v\in V(G)$ (see \cite{LPW}). We are interested in estimating how quickly this Markov chain converges to its limit distribution. For that, recall that the \textit{total variation distance} $d_{TV}$ between two distributions $p_1$ and $p_2$ on $V(G)$ is defined by:
\begin{align*}
    d_{TV}(p_1,p_2):=\max_{A\subset V(G)}\bigg|p_1(A)-p_2(A)\bigg|.
\end{align*}
Let $P^t(v,\cdot)$ denote the distribution on $V(G)$ given by starting the lazy random walk at $v\in V(G)$ and running for $t$ steps. Letting
\begin{align*}
    d(t):=\max_{v\in V(G)}d_{TV}\left(P^t(v,\cdot),\pi\right),
\end{align*}
the \textit{mixing time} of the lazy random walk is then defined by:
\begin{align*}
    t_{mix}:=\min\left\{t:d(t)\le \frac{1}{4}\right\}.
\end{align*}
Now, for any $S\subseteq V(G)$, let
\begin{align*}
    \pi(S):=\sum_{v\in S}\pi(v)=\frac{2e(S)+e(S,S^C)}{2e(G)}.
\end{align*}
We further define:
\begin{align*}
    Q(S):=\sum_{v\in S, u\in S^C}\pi(v)P(v\to u)=\frac{e(S,S^C)}{4e(G)}.
\end{align*}
The \textit{conductance} $\Phi(S)$ of $S$  is then given by:
\begin{align*}
    \Phi(S):=\frac{Q(S)}{\pi(S)\pi(S^C)}=\frac{e(S, S^C)}{2\left(2e(S)+e(S,S^C)\right)\pi(S^C)},
\end{align*}
and we note that since $Q(S)=Q(S^C)$, we have that $\Phi(S)=\Phi(S^C)$. Let $\pi_{\min}=\min_{v\in V(G)}\pi(v)$. For $p>\pi_{\min}$, we define:
\begin{align*}
    \Phi(p):=\min\left\{\Phi(S): S\subseteq V(G), p/2\le \pi(S)\le p, \text{S is connected in } G\right\},
\end{align*}
if there is no such subset $S$, we set $\Phi(p)=1$. The following theorem due to Fountoulakis and Reed \cite{FR1} bounds the mixing time through the conductance of connected sets:
\begin{thm}[Theorem 1 of \cite{FR1}] \label{mixing-time-tool}
\textit{There exists an absolute constant $K$ such that
\begin{align*}
    t_{mix}\le K\sum_{j=1}^{\log_2\pi_{\min}^{-1}}\Phi^{-2}\left(2^{-j}\right).
\end{align*}}
\end{thm}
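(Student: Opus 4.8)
The plan is to prove this conductance-profile bound via the Morris--Peres \emph{evolving set} process, reading off from it a scale-by-scale occupation-time estimate, and then arguing that only the conductance of \emph{connected} sets ever enters. Recall the evolving set chain $(\mathcal S_t)$ attached to the lazy walk: $\mathcal S_0=\{x\}$, and $\mathcal S_{t+1}=\{y:Q(\mathcal S_t,y)\ge U_{t+1}\pi(y)\}$ for i.i.d.\ uniform $U_{t+1}$. It keeps $\pi(\mathcal S_t)$ a martingale, and a standard duality identity expresses $P^t(x,y)/\pi(y)$ in terms of this process started from $\{y\}$; from it, $d(t)\le \tfrac{1}{4}$ follows once, from every singleton start, $\pi(\mathcal S_t)$ has with probability bounded below climbed to within a constant of $1$. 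So the entire task reduces to bounding the number of steps for $\pi(\mathcal S_t)$ to travel from $\pi_{\min}$ up past, say, $\tfrac{3}{4}$.

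The quantitative engine is the Morris--Peres inequality, which for a \emph{lazy} chain gives $\mathbb E[\sqrt{\pi(\mathcal S_{t+1})}\mid \mathcal S_t=S]\le \sqrt{\pi(S)}\,(1-c\,\Phi(S)^2)$ for an absolute $c>0$ — laziness is exactly what buys this clean one-step contraction. Thus $\sqrt{\pi(\mathcal S_t)}$ is a supermartingale whose expected per-step multiplicative decrement is $\Omega(\Phi(\mathcal S_t)^2)$. I would then decompose the ascent dyadically: while $\pi(\mathcal S_t)$ lies in the band $[2^{-j-1},2^{-j}]$ the decrement is $\Omega(\Phi(2^{-j})^2)$ per step, whereas $\sqrt{\pi(\mathcal S_t)}$ can only move by an $O(1)$ multiplicative amount across that band, so a gambler's-ruin/optional-stopping computation bounds the expected number of steps spent in band $j$ (before absorption at $\emptyset$ or reaching $[\tfrac34,1]$) by $O(\Phi(2^{-j})^{-2})$. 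Summing over the at most $\log_2\pi_{\min}^{-1}$ bands, and using that conditioned on not dying the process does reach $[\tfrac34,1]$, the expected time to ``mostly mix'' is $O\big(\sum_j\Phi(2^{-j})^{-2}\big)$ — but with $\Phi$ here still the \emph{unrestricted} conductance.

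Upgrading this to the \emph{connected} conductance profile is the crux and, I expect, the main obstacle. Two structural facts make it plausible. First, for the lazy walk $Q(S,y)/\pi(y)\ge\tfrac12$ when $y\in S$ and $\le\tfrac12$ when $y\notin S$, so every evolving-set step is monotone: on steps with $U_{t+1}\le\tfrac12$ the set only grows, and only inside $N[\mathcal S_t]$, so such a step preserves connectedness; on steps with $U_{t+1}>\tfrac12$ the set only shrinks. Second, $\mathcal S_0=\{x\}$ is connected. The remaining danger is that a shrink step shatters $\mathcal S_t$ into a heavy connected core plus light ``crumbs,'' making $\Phi(\mathcal S_t)$ (over all sets) artificially small. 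I would handle this by following $\widehat{\mathcal S}_t$, the $\pi$-heaviest connected component of $\mathcal S_t$: one shows $\pi(\widehat{\mathcal S}_t)$ still obeys essentially the same supermartingale estimate up to the small mass shed to crumbs, applies the Morris--Peres bound to the \emph{connected} set $\widehat{\mathcal S}_t$ at the cost of only a constant factor in $\Phi$, and charges the mass and time lost to crumbs — by the same dyadic accounting, now run component by component and scale by scale — back into $O\big(\sum_j\Phi(2^{-j})^{-2}\big)$ at the price of a larger absolute constant. Controlling precisely how $\widehat{\mathcal S}_t$ may jump when a shrink step reshuffles components is the delicate part, and is where the real content of the Fountoulakis--Reed argument lies.

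Finally I would assemble the pieces: sum the per-band occupation bounds over $j=1,\dots,\lceil\log_2\pi_{\min}^{-1}\rceil$. The $O(1)$ bands near $\pi(S)\approx 1$ (handled via $\Phi(S)=\Phi(S^c)$) and the $O(1)$ bottommost bands, where no connected set of the prescribed measure exists and hence $\Phi=1$ by convention, contribute only a bounded additive amount; all other terms are $\Phi(2^{-j})^{-2}\ge 1$; and every constant accrued along the way — from the Morris--Peres inequality, the gambler's-ruin estimates, the crumb-charging, and the passage from the $L^\infty$ relative-density bound to total variation — folds into the single absolute constant $K$. This yields $t_{mix}\le K\sum_{j=1}^{\log_2\pi_{\min}^{-1}}\Phi^{-2}(2^{-j})$, as claimed.
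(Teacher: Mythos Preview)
The paper does not prove this theorem at all: it is quoted verbatim as ``Theorem 1 of \cite{FR1}'' (Fountoulakis--Reed, \emph{Faster mixing and small bottlenecks}) and used as a black box in the proof of Theorem~\ref{mixing-time}. There is therefore no ``paper's own proof'' to compare your proposal against; any proof you supply would be going strictly beyond what the present paper does.

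As for the proposal itself, a brief comment: the evolving-set route you sketch is essentially the Morris--Peres argument, which yields a conductance-profile bound of the same shape but over \emph{all} sets $S$, not just connected ones. You correctly identify that the passage to the \emph{connected} profile is the whole point of the Fountoulakis--Reed result, and you also correctly identify it as the crux --- but then you do not actually carry it out. Your ``heaviest connected component'' heuristic is plausible, yet the sentence ``Controlling precisely how $\widehat{\mathcal S}_t$ may jump when a shrink step reshuffles components is the delicate part, and is where the real content of the Fountoulakis--Reed argument lies'' is an admission that the key step is missing. In fact, the original proof in \cite{FR1} does \emph{not} proceed via evolving sets; it uses a direct argument (building on Lov\'asz--Kannan) in which the restriction to connected sets arises naturally from the way one grows the set under consideration, rather than from tracking components of a shrinking set. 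So while your framework can recover the unrestricted profile bound, the specific connected-set refinement would require either a genuinely new argument or a reading of \cite{FR1}.
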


Throughout the rest of this section, we consider the mixing time of the lazy random walk on the giant component. Formally, let $\epsilon>0$ be a small enough constant and let $\delta>0$ be such that $\delta<\epsilon^4$. Let $G$ be an $(n,d,\lambda)$-graph with $\frac{\lambda}{d}\le \delta$. Let $p=\frac{1+\epsilon}{d}$. Let $G_p$ be the graph obtained by retaining each edge of $G$ independently with probability $p$. By Theorem \ref{FKM-thm}, \textbf{whp} there is a unique giant component in $G_p$ which we denote by $L_1$. Below, $e(S)$ will stand for $e_{G_p}(S)$ and $e\left(S,S^C\right)$ will stand for $\big|\partial_{G_p}(S)\big|$. Equipped with these notation, we now establish a couple of lemmas.

\begin{lemma}\label{mixing-time-lemma1}
\textit{\textbf{Whp} for any $S\subseteq L_1$ such that $S$ is connected in $G_p$ and $e(S)+e(S,S^C)\ge \frac{160\ln n}{\epsilon^2}$, we have that $|S|\ge \frac{16\ln n}{\epsilon ^2}$.}
\end{lemma}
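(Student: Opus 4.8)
The plan is to prove the contrapositive: whp every set $S$ that is connected in $G_p$ and has $|S| = k < \frac{16\ln n}{\epsilon^2}$ satisfies $e_{G_p}(S) + e_{G_p}(S, S^C) < \frac{160\ln n}{\epsilon^2}$ (the restriction $S\subseteq L_1$ plays no role). I would split the argument according to the size of $k$, with a cut-off at $k = \ln n$.

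For $\ln n \le k < \frac{16\ln n}{\epsilon^2}$ there is nothing to do beyond quoting Lemma \ref{incident-edges}: whp every connected set of size $k \ge \ln n$ in $G_p$ satisfies $e_{G_p}(S) + e_{G_p}(S, S^C) < 10k \le 10\cdot\frac{16\ln n}{\epsilon^2} = \frac{160\ln n}{\epsilon^2}$. The constant $160 = 10\cdot 16$ is chosen precisely so that this matches the threshold in the statement.

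For $1 \le k < \ln n$ I would run a first-moment union bound of the same flavour as in the proof of Lemma \ref{incident-edges} and of Property 1 of Theorem \ref{vertex-expansion}. A connected set $S$ of size $k$ contains a spanning tree $T$ on $k$ vertices; by Lemma \ref{trees} there are at most $n(ed)^{k-1}$ such trees in $G$, and $T \subseteq G_p$ with probability $p^{k-1}$. The $k-1$ edges of $T$ lie inside $S$, hence already account for $k-1$ of the quantity $e_{G_p}(S) + e_{G_p}(S,S^C)$; so if this quantity reaches $\frac{160\ln n}{\epsilon^2}$, then at least $m := \frac{160\ln n}{\epsilon^2} - (k-1) > \frac{144\ln n}{\epsilon^2}$ of the at most $kd$ remaining edges of $G$ incident to $S$ must also be retained, which happens with probability at most $\binom{kd}{m}p^m \le \left(\frac{ek(1+\epsilon)}{m}\right)^m$. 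Since $k < \frac{16\ln n}{\epsilon^2}$ while $m > \frac{144\ln n}{\epsilon^2}$, the base here is bounded by an absolute constant strictly below $1$, so this probability is $n^{-\Omega(1/\epsilon^2)}$; combined with $n(ed)^{k-1}p^{k-1} = n(e(1+\epsilon))^{k-1} \le n^{O(1)}$ (using $k < \ln n$) and a union bound over the fewer than $\ln n$ relevant values of $k$, the total failure probability is $o(1/n)$.

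I do not expect a genuine obstacle. For $k \ge \ln n$ the claim is already contained in Lemma \ref{incident-edges}, and for $k < \ln n$ the union bound is routine. The only point deserving a moment's care is to verify, in the small-$k$ case, that the hypothesis $e_{G_p}(S)+e_{G_p}(S,S^C) \ge \frac{160\ln n}{\epsilon^2}$ forces a number $m$ of extra retained edges that is genuinely much larger than $k$ — indeed $m = \Omega(\ln n/\epsilon^2)$ while $k = O(\ln n)$ — so that the binomial-tail factor $\binom{kd}{m}p^m$ comfortably beats the $n^{O(1)}$ count of candidate (retained) spanning trees.
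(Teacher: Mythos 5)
Your proof is correct, but it routes around the hypothesis that the paper actually uses. The paper's own argument is a one-liner exploiting $S\subseteq L_1$: since $L_1$ is connected and (whp, by Theorem \ref{FKM-thm}) of linear size, any connected $S\subseteq L_1$ with $|S|<\frac{16\ln n}{\epsilon^2}$ can be greedily grown inside $L_1$ to a connected $S'\supseteq S$ with $|S'|=\frac{16\ln n}{\epsilon^2}$ exactly; the quantity $e(S)+e(S,S^C)$ (edges meeting $S$) is monotone under enlargement, so a single application of Lemma \ref{incident-edges} to $S'$ gives the bound $10\cdot\frac{16\ln n}{\epsilon^2}$ for every such $S$, with no case split and no new counting. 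You instead observe (correctly) that containment in $L_1$ is dispensable, apply Lemma \ref{incident-edges} directly in the range $\ln n\le k<\frac{16\ln n}{\epsilon^2}$, and pay for the range $k<\ln n$ (where Lemma \ref{incident-edges} is silent) with a fresh first-moment bound over spanning trees plus $m>\frac{144\ln n}{\epsilon^2}$ extra incident edges; your estimate $n(e(1+\epsilon))^{k-1}\binom{kd}{m}p^m\le n^{O(1)}\cdot\left(O(\epsilon^2)\right)^{m}=n^{-\Omega(1/\epsilon^2)}$ is sound (and the event is vacuous anyway when $kd<m$). The trade-off: your version proves a slightly stronger statement, valid for all connected sets of $G_p$ rather than only subsets of $L_1$, at the cost of an extra (routine) calculation; the paper's version is shorter because the $S\subseteq L_1$ hypothesis — which you declare to play no role — is precisely what lets one avoid ever looking at sets smaller than $\ln n$.
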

\begin{proof}
We prove the contrapositive. Since $S\subseteq L_1$, by Theorem \ref{FKM-thm} \textbf{whp} $S$ is contained in some connected subset $S'\subseteq L_1$ with $|S'|=\frac{16\ln n}{\epsilon^2}$. Observing that $e(S)+e(S,S^C)$ increases when moving from $S$ to $S'\supseteq S$, we have by Lemma \ref{incident-edges} that \textbf{whp}
\begin{align*}
    e(S)+e(S,S^C)\le e(S')+e(S', S'^C)\le \frac{160\ln n}{\epsilon^2}.
\end{align*}
\end{proof}

Using Lemma \ref{mixing-time-lemma1}, we are now able to bound the conductance of relevant connected sets. We will make use of the notion of \textit{excess} of a graph. Recall that the excess of a connected graph $G=(V,E)$ is defined as $|V|-|E|+1$.
\begin{lemma} \label{mixing-time-lemma2}
\textit{There exist positive constants $c$ and $C$ such that \textbf{whp}, for every $S\subseteq L_1$ with $S$ connected in $G_p$ and $\frac{C\ln n}{\epsilon^3 n}\le \pi(S)\le \frac{1}{2}$,
$$\Phi(S)\ge c\frac{\epsilon^2}{\ln\left(\frac{1}{\epsilon}\right)}.$$}
\end{lemma}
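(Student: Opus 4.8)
The plan is to bound the conductance $\Phi(S) = \frac{e(S,S^C)}{2(2e(S)+e(S,S^C))\pi(S^C)}$ from below by controlling the numerator (edge-expansion) from below and the denominator (the volume $2e(S)+e(S,S^C)$, which is essentially $2e(L_1)\pi(S)$) from above. Since $\pi(S^C) \ge \frac12$ always (as $\pi(S) \le \frac12$), that factor is harmless. So it suffices to show \textbf{whp}, for all relevant connected $S$, that $e(S,S^C) \ge c'\frac{\epsilon^2}{\ln(1/\epsilon)}\cdot\big(2e(S)+e(S,S^C)\big)$ for an absolute constant $c'$.

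First I would split according to the size of the volume $\mathrm{vol}(S) := 2e(S)+e(S,S^C)$, equivalently according to $|S|$. By Lemma~\ref{mixing-time-lemma1}, the hypothesis $\pi(S)\ge \frac{C\ln n}{\epsilon^3 n}$ forces $\mathrm{vol}(S) = 2e(L_1)\pi(S) \ge \frac{160\ln n}{\epsilon^2}$ (using that $e(L_1) = \Theta(\epsilon n)\cdot d /$ something — more precisely $2e(G_p)\ge e(L_1) \gtrsim \epsilon n d$ whp, so $\pi(S)\ge \frac{C\ln n}{\epsilon^3 n}$ translates to $\mathrm{vol}(S)\gtrsim \frac{\ln n}{\epsilon^2}$ for $C$ large), and hence $|S| \ge \frac{16\ln n}{\epsilon^2}$. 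In the regime $\frac{16\ln n}{\epsilon^2} \le |S| \le \frac{12\epsilon n}{11}$, Lemma~\ref{edge-expansion-connected-sets} gives $e(S,S^C) = |\partial_{G_p}(S)| \ge \frac{c_{(\ref{edge-expansion-connected-sets})}\epsilon^2 |S|}{\ln(1/\epsilon)}$, while Lemma~\ref{incident-edges} gives $\mathrm{vol}(S) = e(S)+e(S,S^C) + e(S) < 10|S| + e(S) < 20|S|$ (or one applies Lemma~\ref{incident-edges} directly to bound $e(S)+e(S,S^C) < 10|S|$, hence $\mathrm{vol}(S) < 20|S|$). Combining, $\Phi(S) \ge \frac{e(S,S^C)}{2\,\mathrm{vol}(S)} \ge \frac{c_{(\ref{edge-expansion-connected-sets})}\epsilon^2|S|/\ln(1/\epsilon)}{40|S|} = \frac{c_{(\ref{edge-expansion-connected-sets})}\epsilon^2}{40\ln(1/\epsilon)}$, as desired.

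Next I would handle the remaining regime $|S| > \frac{12\epsilon n}{11}$, which still satisfies $|S|\le |L_1|/2$ (since $\pi(S)\le \frac12$ and $L_1$ is roughly regular-ish in volume). Here the cleanest move is to pass to the complement \emph{within} $L_1$: write $S' = L_1 \setminus S$, so $\Phi(S) = \Phi(S')$ by the symmetry $Q(S)=Q(S^C)$ noted in the text, and now $|S'| \ge \frac{|L_1|}{2} - $ (a controlled amount). Actually, since $\pi(S)\le\frac12$, the complement within $L_1$ has $\mathrm{vol}(S') \ge \mathrm{vol}(S)$, and $e(S,S^C) = e_{G_p}(S, L_1\setminus S) = e_{G_p}(S', L_1\setminus S')$. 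So I would instead just apply Theorem~\ref{edge-expansion} (the linear-sized edge-expansion, valid for \emph{all} subsets of size up to $\frac{12\epsilon n}{11}$) to whichever of $S$ or its $L_1$-complement has size in $[\frac{\epsilon^2 n}{50}, \frac{12\epsilon n}{11}]$ — and by Theorem~\ref{FKM-thm}, $|L_1| \le 2\epsilon n$ whp, so $|S| \le \epsilon n < \frac{12\epsilon n}{11}$ automatically and if $|S| > \frac{12\epsilon n}{11}$ is impossible; the real issue is only $|S|$ between $\frac{\epsilon n}{1}$-ish and $|L_1|/2$, but since $|L_1|/2 \le \epsilon n < \frac{12\epsilon n}{11}$, Theorem~\ref{edge-expansion} applies to $S$ directly once $|S|\ge \frac{\epsilon^2n}{50}$. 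For the middle range $\frac{16\ln n}{\epsilon^2} < |S| < \frac{\epsilon^2 n}{50}$ I use Lemma~\ref{edge-expansion-connected-sets} (or Property~1 of Theorem~\ref{vertex-expansion}) as above. In every case the volume upper bound $\mathrm{vol}(S) \le 20|S|$ comes from Lemma~\ref{incident-edges} when $|S|$ is at least logarithmic, and the constant $c$ is obtained by taking the minimum of the handful of constants produced.

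The main obstacle I anticipate is bookkeeping at the interface between the conductance formulation (which is stated in terms of $\pi(S)$ and $e(L_1)$) and the combinatorial expansion lemmas (stated in terms of $|S|$). I need a clean two-sided estimate $e(L_1) = \Theta(\epsilon n)\cdot$(something) — more precisely, whp $2e(G_p) = (1\pm o(1))(1+\epsilon)n$ and $e(L_1)$ is a constant fraction of that — to convert $\pi(S)\ge \frac{C\ln n}{\epsilon^3 n}$ into $|S| \ge \frac{16\ln n}{\epsilon^2}$ (for which Lemma~\ref{mixing-time-lemma1} is tailor-made, so the work is really just checking the constant $C$) and to convert $\pi(S)\le\frac12$ into a usable upper bound on $|S|$. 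The other delicate point is ensuring that when $|S|$ falls in the genuinely linear window I am entitled to use Theorem~\ref{edge-expansion} rather than only Lemma~\ref{edge-expansion-connected-sets}; since $S$ here is required to be connected this is fine, and Lemma~\ref{edge-expansion-connected-sets} already covers all connected $S$ with $\frac{16\ln n}{\epsilon^2}\le|S|\le\frac{12\epsilon n}{11}$, so in fact a single application of Lemma~\ref{edge-expansion-connected-sets} plus Lemma~\ref{incident-edges} suffices for the whole range, and Theorem~\ref{edge-expansion} is not even needed. That simplification is worth making explicit in the write-up.
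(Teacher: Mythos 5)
Your overall architecture is the same as the paper's: use Lemma \ref{mixing-time-lemma1} to turn $\pi(S)\ge \frac{C\ln n}{\epsilon^3 n}$ into $|S|\ge \frac{16\ln n}{\epsilon^2}$, then combine the edge-expansion of connected sets with the volume bound $2e(S)+e(S,S^C)\le 20|S|$ from Lemma \ref{incident-edges}. Your observation that a single application of Lemma \ref{edge-expansion-connected-sets} covers the whole range $\frac{16\ln n}{\epsilon^2}\le|S|\le\frac{12\epsilon n}{11}$ (so that Theorem \ref{edge-expansion} is not needed for connected $S$) is correct and is essentially what the paper does.

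There is, however, one genuine gap: the deduction of the upper bound $|S|\le \frac{12\epsilon n}{11}$ from $\pi(S)\le\frac12$. You argue via ``$|L_1|/2\le \epsilon n$'', implicitly inferring $|S|\le|L_1|/2$ from $\pi(S)\le\frac12$; this inference is invalid, since $\pi$ is the degree-weighted measure and a set of more than half the vertices can carry less than half the volume. Moreover $|S|\le|L_1|$ alone gives only $|S|\le (1+o_\epsilon(1))2\epsilon n$, which exceeds $\frac{12\epsilon n}{11}$, so without the size restriction Lemma \ref{edge-expansion-connected-sets} does not apply. The correct route (the paper's) uses the connectivity of $S$ together with a bound on $e(L_1)$: from $\pi(S)\le\frac12$ one gets $2e(S)+e(S,S^C)\le e(L_1)$, hence $|S|\le 1+e(S)\le 1+\frac{e(L_1)}{2}$, and then one needs the fact (read off from the proof of Theorem 2 in \cite{FKM}) that \textbf{whp} the excess of $L_1$ is at most $\epsilon^2 n$, so that $e(L_1)\le |L_1|+\epsilon^2 n\le (2+o_\epsilon(1))\epsilon n+\epsilon^2 n$ and $|S|\le \frac{12\epsilon n}{11}$. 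The crude bound $e(L_1)\le e(G_p)\approx \frac{(1+\epsilon)n}{2}$ is far too weak here, so the excess bound is a genuinely needed ingredient that your write-up omits. Two smaller slips worth fixing: the factor $\pi(S^C)$ in the denominator is controlled by $\pi(S^C)\le 1$, not by $\pi(S^C)\ge\frac12$ (a large $\pi(S^C)$ makes $\Phi(S)$ smaller, not larger); and $e(L_1)$ is of order $\epsilon n$, not $\epsilon nd$ --- the percolated graph has average degree about $1+\epsilon$, and the paper simply uses $e(L_1)\ge|L_1|-1$ for the lower bound.
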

\begin{proof}
It can be easily read from the proof of Theorem 2 in \cite{FKM} that \textbf{whp} the excess of $L_1$ is at most $\epsilon^2n$. Furthermore, $S$ is connected and thus $e(S)\ge |S|-1$. Since $\pi(S)=\frac{2e(S)+e(S,S^C)}{2e(L_1)}\le \frac{1}{2}$, we obtain that \textbf{whp},
\begin{align*}
    |S|\le 1+e(S)\le 1+\frac{e(L_1)-e(S,S^C)}{2}\le 1+\frac{|L_1|+\epsilon^2n}{2}\le \frac{12\epsilon n}{11},
\end{align*}
where the last inequality holds \textbf{whp} by Theorem \ref{FKM-thm}. 

On the other hand, $L_1$ is connected, and hence $e(L_1)\ge |L_1|-1$. Since $\pi(S)\ge \frac{C\ln n}{\epsilon^3 n}$, we obtain that \textbf{whp}
\begin{align*}
    e(S)+e(S,S^C)\ge \frac{2e(S)+e(S,S^C)}{2}\ge \frac{C\ln n\cdot e(L_1)}{\epsilon^3n}\ge \frac{C\ln n}{\epsilon^2},
\end{align*}
where the last inequality once again holds \textbf{whp} by Theorem \ref{FKM-thm} and $e(L_1)\ge |L_1|-1$. Hence, choosing $C=160$, we obtain by Lemma \ref{mixing-time-lemma1} that \textbf{whp} $|S|\ge\frac{16\ln n}{\epsilon ^2}$.

All in all, we have that \textbf{whp} $\frac{16\ln n}{\epsilon^2}\le |S|\le \frac{12\epsilon n}{11}$. Thus, by Theorem \ref{vertex-expansion} and Theorem \ref{edge-expansion}, \textbf{whp} $e(S,S^C)\ge \frac{c'\epsilon^2|S|}{\ln\left(\frac{1}{\epsilon}\right)}$, and by Lemma \ref{incident-edges} \textbf{whp} $2e(S)+e(S,S^C)\le 2\left(e(S)+e(S,S^C)\right)\le 20|S|$. Therefore, \textbf{whp}
\begin{align*}
    \Phi(S)&=\frac{e(S, S^C)}{2\left(2e(S)+e(S,S^C)\right)\pi(S^C)}\\
    &\ge \frac{c'\epsilon^2|S|}{\ln\left(\frac{1}{\epsilon}\right)\cdot 2\cdot 20|S|}.
\end{align*}
Hence we obtain that $\Phi(S)\ge c\frac{\epsilon ^2}{\ln\left(\frac{1}{\epsilon}\right)},$
for a fitting choice of $c.$
\end{proof}

We are now ready to prove Theorem \ref{mixing-time}.
\begin{proof}[Proof of Theorem \ref{mixing-time}]
By Theorem \ref{mixing-time-tool}, there is an absolute constant $K$ such that
$$t_{mix}\le K\sum_{j=1}^{\log_2\pi_{\min}^{-1}}\Phi^{-2}\left(2^{-j}\right).$$
Let $C$ be as defined in Lemma \ref{mixing-time-lemma2}. Observe that if $2^{-j}>\frac{2C\ln n}{\epsilon^3 n}$, then $$-j> -\log_2\left(\frac{\epsilon^3 n}{2C\ln n}\right),$$
and hence $j<\log_2n$. As such, we may define $J$ to be the set of indices $j$ satisfying $2^{-j}\le \frac{2C\ln n}{\epsilon^3n}$ and note that $\big|J^C\big|<\log_2n$. Now, for $i\in J^C$, we have that $2^{-i}>\frac{2C\ln n}{\epsilon^3n}$, and therefore $\Phi(2^{-i})$ denotes the minimum conductance of connected sets $S$ with $\pi(S)\ge \frac{C\ln n}{\epsilon^3n}$, by our definition. Furthermore, since $j\ge 1$, we only consider sets $S$ such that $\pi(S)\le \frac{1}{2}$. Hence, for $i\in J^C$, we may use Lemma \ref{mixing-time-lemma2} to obtain that \textbf{whp} $\Phi\left(2^{-i}\right)\ge c \frac{\epsilon^2}{\ln\left(\frac{1}{\epsilon}\right)}$. Thus, \textbf{whp}
\begin{align*}
    \sum_{j=1}^{\log_2\pi_{\min}^{-1}}\Phi^{-2}\left(2^{-j}\right)&\le \big|J^C\big|\cdot \left(c \frac{\epsilon^2}{\ln\left(\frac{1}{\epsilon}\right)}\right)^{-2}+\sum_{j\in J}\Phi^{-2}\left(2^{-j}\right)\\
    &\le O\left(\frac{\log n}{\epsilon^4}\right)+\sum_{j\in J}\Phi^{-2}\left(2^{-j}\right).
\end{align*}
In order to bound $\sum_{j\in J}\Phi^{-2}\left(2^{-j}\right)$, observe that since $L_1$ is connected, for every subset $S\subset V(L_1)$ we have:
\begin{align*}
    \Phi(S)=\Phi(S^C)\ge \frac{e(S,S^C)}{4e(L_1)\pi(S)}\ge \frac{1}{4e(L_1)\pi(S)}.
\end{align*}
Recall that the excess of $L_1$ is \textbf{whp} at most $\epsilon^2n$, and therefore by Theorem \ref{FKM-thm} \textbf{whp} $e(L_1)\le 3\epsilon n$. Furthermore, when considering $\Phi\left(2^{-j}\right)$, we restrict ourselves to $\pi(S)\le 2^{-j}$. Thus, \textbf{whp}
\begin{align*}
    \Phi(S)\ge \frac{1}{4e(L_1)\pi(S)}\ge \frac{2^j}{12\epsilon n}.
\end{align*}
Hence, we have that \textbf{whp}
\begin{align*}
    \sum_{j\in J}\Phi^{-2}\left(2^{-j}\right)&\le 2\max_{j\in J}\left\{2^{-2j}\right\}12^2\epsilon^2n^2\\
    &\le 2\cdot 12^2\epsilon^2n^2\left(\frac{2C\ln n}{\epsilon^3 n}\right)^2\\
    &\le C'\frac{\ln^2n}{\epsilon^4},
\end{align*}
completing the proof.
\end{proof}

\end{document}